\def\/{\, | \,}
\def\ind{{\mathchoice {\rm 1\mskip-4mu l} {\rm 1\mskip-4mu l}
{\rm 1\mskip-4.5mu l} {\rm 1\mskip-5mu l}}}
\def\N{{\mathbb N}}
\def\Z{{\mathbb Z}}
\def\P{{\mathbb P}}
\newcommand{\R}{{\mathbb R}}
\newcommand{\pae}[1]{\mbox{$\lfloor \kern-1pt #1 \kern-1pt \rfloor$}}
\newcommand{\paep}[1]{\mbox{$\lceil \kern-1pt #1 \kern-1pt \rceil$}}
\newcommand\bp{\bar{\mathbf P}}
\newcommand\pr[1]{{\mathbb P}\left[#1\right]}
\newcommand\suite[1]{\left\{#1_n;\,n\in\N\right\}}
\newcommand\suiten[1]{\left\{#1;\,n\in\N\right\}}
\newcommand\om{{\omega}}
\newcommand\gre{\textbf{e}}
\newcommand\maG{{\mathcal G}}
\newcommand\maF{{\mathcal F}}
\newcommand\maB{{\mathcal B}}
\newcommand\maA{{\mathcal A}}
\newcommand\maC{{\mathcal C}}
\newcommand\maE{{\mathcal E}}
\newcommand\maD{{\mathcal D}}
\def\RS{\overline {(\R_+)^S}}
\def\lllbracket{\left[}
\def\rrrbracket{\right]}
\begin{document}

\title{Coupling in the queue with impatience: case of several servers}


\author{Pascal Moyal}


\institute{P. Moyal \at
Universit\'e de Technologie de Compi\`egne\\
Laboratoire de Mathématiques Appliqu\'ees de Compi\`egne\\
Rue Roger Couttolenc
CS 60319\\
60203 Compi\`egne, France.
              Tel.: +33 3 44 23 44 99\\
              \email{pascal.moyal@utc.fr}           
}

\date{Received: date / Accepted: date}

\maketitle

\begin{abstract}
We present the explicit construction of a stable queue with several 
servers and impatient customers, under stationary ergodic
assumptions. Using a stochastic comparison of the (multivariate) workload sequence with two monotonic stochastic recursions, we propose a sufficient condition of existence of a
unique stationary state of the system using Renovation theory. Whenever this condition is relaxed we use extension techniques
to prove the existence of a stationary state in some cases.
\keywords{Real-time queueing systems \and Stochastic recursive sequences \and Coupling \and Ergodic theory.}
\end{abstract}

\section{Introduction}
\label{sec:intro}
In the catalog of the various probabilistic techniques for studying the stability of discrete-event random systems, the so-called stationary ergodic framework
has two main features~: (i) it addresses general statistical assumptions (stationarity and ergodicity, but not necessarily independence, of the random sequences representing the input) and (ii) it allows an {\em explicit} construction of the steady state.
Specifically, we know since the pioneering works of Loynes \cite{Loynes62} and then Borovkov \cite{Bor84} that, when assuming that the input to the system is time-stationary and ergodic, 
backwards schemes and strong backwards coupling convergence, can lead, via the resolution on an adequate probability space, of a pathwise fix point equation of the form (\ref{eq:recurstat}) below, 
to an explicit construction of the stationary state of the system under consideration. 
Its existence/uniqueness, which is in turn equivalent to the existence/uniqueness of a solution to (\ref{eq:recurstat}), can be investigated
by studying the algebraic properties of the driving map of the stochastic recursion representing the model (denoted by $\varphi$ in (\ref{eq:recurstat})). The details of the construction can be found {\em e.g.} in \cite{Bor84}, \cite{BranFranLis90}, Chapters 1 and 2 of \cite{BacBre02}, or Chapter 2 of \cite{DecMoy12}.

\medskip

Beyond the derivation of the stability region itself, this explicit construction can be used for various purposes: one can e.g. use pathwise representations to compare systems in steady state, 
via a stochastic recursive representation of the system, and the stochastic ordering of a given performance metric. The reader is referred to Chapter 4 of \cite{BacBre02}, in which many such comparison results for queuing systems 
(along service disciplines, distributional primitives, etc.) can be found. 
Specific comparison results concerning single-server queueing systems with impatient customers are also provided in \cite{Moy08} and \cite{Moy13}. 
Moreover, it is well known since the seminal work of Propp and Wilson \cite{PW96}, that coupling from the past algorithms, which 
mostly use backwards coupling convergence, provide a powerful tool for simulating in many cases (monotonicity, stochastic bounds of Markov chains) the steady state of the system. 

\medskip

Along these lines, this paper is devoted to the explicit construction of a stationary queue with $S$ servers ($S \ge 1$) and impatient customers, by a scheme {\em \`a la} Loynes.  
Models with impatience (or abandonment, reneging) have been introduced in the queueing literature to represent a strong real-time constraint on the system: the requests have a due date,
before which their treatment must be initiated, or completed. Specifically, we assume hereafter that any incoming customer is either served if a server becomes available before her deadline,
or else eliminated forever once the deadline has elapsed.
Observe that a loss system of $S$ systems (i.e. there is no waiting room, so the incoming customers
are either served provided that a server is immediately available, or immediately lost otherwise) is a particular case of the
present model, for an identically null patience.

It is intuitively clear that a queueing system in which the sojourn times of the customers are bounded by their patience, cannot explode in the long run.
However, under such general assumptions, due to the intrinsic non-monotonicity of this non-conservative model, the existence, and uniqueness, of the stationary state, do not follow from the above observation.
The first contributions to the stability study of queues with impatience can be found in \cite{MR86d:60103} and \cite{HebBac81}, in which a GI/GI/1/1+GI-FIFO queue (that is, all input sequences: inter-arrivals, service times and patience times are IID and there is a single server) is investigated, as well as a muti-server M/M/S/S+GI-FCFS queue. For the single-server case, the independence assumption is relaxed in 
\cite{Moy09}, which proposes a constructive scheme of the stationary workload using renovation theory. 
Under more general statistical assumptions on the input, \cite{Moy15} proposes an explicit extension scheme allowing to
construct the stationary workload at least on a enriched probability space, in line with the {\em weak stability} approach of \cite{Lis82,Neu83,Fli83}. For other references on the stability study of non-monotonic queueing systems, let us also mention \cite{Fli88,Moy16} on loss systems with several servers,
\cite{Moy13} on single-server queues with impatience and general service discipline, and \cite{AK97,AK99} on perturbed sub-critical single-server queues.

\medskip

The present paper addresses the case of a G/G/S/S+G-FCFS queue: there are $S$ servers obeying the First Come, First Served (FCFS) rule to serve impatient customers, 
and the sequences of inter-arrival times, service times and patience times of the customers are assumed stationary and ergodic, but not necessarily independent.
In that case, the workload vector measured at arrival times is stochastic recursive, see Section \ref{subsec:workload} below.
By constructing the suitable stochastic bounds of that recursion (respectively driven by the random maps
defined by (\ref{eq:defoverphi}) and (\ref{eq:defunderphi}) below),
we provide explicit conditions for the existence and uniqueness of a stationary workload vector, by applying under specific conditions, the Renovation theory
of Borovkov and Foss \cite{Bor84,Foss92}. Whenever no renovation of the stochastic upper-bound is granted, we use
the tools of weak stability, to solve the stability problem at least on an extension of the original probability space.

By doing so, our work generalizes:
\begin{itemize}
\item The stability results of \cite{MR86d:60103} and \cite{HebBac81} to non-exponential, and non-necessarily independent inter-arrival, service and patience times;
\item The results of \cite{Moy09} and Section 5 of \cite{Moy15} to an arbitrary number of servers;
\item The results of \cite{Lis82,Fli83,Fli88} and \cite{Moy16} (Theorem 2), to non-necessarily null patience times.
\end{itemize}

\bigskip


\section{Preliminary}
\label{sec:prelim}

\subsection{Main notation}
\label{subsec:notation}
In what follows, $\R$ denotes the real line and $\R_+$, the subset of non-negative numbers. Denote $\mathbb N$ (respectively $\N^*$, $\mathbb Z$), the subset of non-negative (resp. positive, relative) integers. For any  $p$ and $q$ in $\N$, $\llbracket p,q \rrbracket$ denotes the family
$\{p,p+1,...,q\}$. For any $x,y \in \R$, denote $x\wedge y=\min(x,y)$, $x \vee y=\max(x,y)$ and $x^+ = x \vee 0$.
Let $S \in \N^*$. We denote for all $u, v \in \R^S$ and $\lambda \in \R$,
\begin{align*}
u&=\left(u(1),u(2),...,u(S)\right);\quad\quad
\lambda u=\left(\lambda u(1),...,\lambda u(S)\right);\\
u+v &=\left(u(1)+v(1),...,u(S)+v(S)\right);\quad\quad u^+ =(u(1)^+,u(2)^+,...,u(S)^+);\\
\bar u &,\,\mbox{ the fully ordered version of }u,\mbox{ i.e. }\bar u(1) \le ... \le \bar u(S);\\
\mathbf 0&=(0,\,...,\,0);\,
\quad\quad\gre_i =(0,\,...,\,\underbrace{1}_{i},\, ... 0)\mbox{ for all }i\in\llbracket 1,S \rrbracket.
\end{align*}
Denote by $\overline {(\R_+)^S}$ the subset of fully ordered vectors, furnished with the euclidean norm.
We equip $\RS$ with the coordinate-wise ordering "$\prec$", {\em i.e.}
$u \prec v$ if and only if $u(i) \le v(i)$ for all $i\in \llbracket 1,S \rrbracket.$
For two vectors $u,v \in \RS$, with some abuse of notation we denote
\begin{equation}
\label{eq:lll}
\lllbracket u,v \rrrbracket = \left\{w \in \RS;\,u \prec w \prec v\right\}.
\end{equation}

\subsection{Stochastic Recursive Sequences}
\label{subsec:SRS}
The stability results presented below will be stated in ergodic theoretical terms, by investigating the existence of a stationary
version of a stochastic recursion representing the system under consideration. Let us recall the basics of this framework.
For more details, the reader is referred to the monographs \cite{BranFranLis90},
\cite{BacBre02} (Sections 2.1 and 2.5), \cite{Rob03} (Chapter 7), or \cite{DecMoy12} (Sections 3.1 and 3.2).

\paragraph{Stationary ergodic quadruple}
We say that $\mathscr Q=\left(\Omega,\mathscr F,\mathbb P,\theta\right)$ is a stationary ergodic quadruple if
$\left(\Omega,\mathscr F,\mathbb P\right)$ is a probability space and $\theta$ is a bijective operator
on $\mathscr F$ (we denote by $\theta^{-1}$ its reciprocal operator), such that for any $\maA \in \mathscr F$, $\pr{\maA}=\pr{\theta^{-1}\maA}$ and any $\theta$-invariant event
$\maB$ ({\em i.e.}, such that $\maB=\theta^{-1}\maB$) is either negligible or almost sure. Note that this is then also
true for any $\theta$-contracting event $\maB$ ({\em i.e.} such that $\pr{\maB \,\Delta \,\theta^{-1}\maB}=0$, where $\Delta$ denotes the symmetrical difference). We denote for any $n\in\N^*$, by
$\theta^n$ (respectively, $\theta^{-n}$), the $n$-th iterate of $\theta$ (resp., of $\theta^{-1}$).
Hereafter, for any random variable (r.v., for short) defined on $\mathscr Q$, we denote by $X[\om]$ the value of $X$ at the sample $\om \in \Omega$.

\paragraph{Stochastic recursive sequences}
In many concrete cases, the evolution of a physical discrete-event random system defined on a reference probability space
$\left(\tilde\Omega,\tilde{\mathscr F},\tilde{\mathbb P}\right)$ can be represented recursively
by a random sequence $\suite{\tilde{W}}$ valued in some Polish space $E$ (furnished with its Borel $\sigma$-algebra), and satisfying the recursion
\[\tilde W_{n+1} =f\left(\tilde W_n,\tilde \alpha_n\right),\,\tilde{\mathbb P}\mbox{-a.s.}\]
for some $F$-valued random sequence $\suite{\tilde \alpha}$ (where $F$ is some auxiliary space), and $f$ a measurable map from $E \times F$ to $E$.
Then, whenever $\suite{\tilde \alpha}$ is identically distributed and ergodic, one can construct a stationary ergodic quadruple $\mathscr Q=\left(\Omega,\mathscr F,\mathbb P,\theta\right)$, on the bi-infinite canonical space $\Omega:=F^{\mathbb Z}$ of $\suite{\tilde \alpha}$.
In the queueing context, $\suite{\tilde \alpha}$ typically represents the input to the queue (in the present case: inter-arrival, service and patience times).
The quadruple $\mathscr Q$ is then often called {\em Palm space} of arrivals.
Moreover, there exists on
$\mathscr Q$ a $F$-valued r.v. $\alpha$, and a $E$-valued r.v. $X$ such that the recursion
\begin{equation}
\label{eq:defSRS}
\left\{\begin{array}{ll}
W^{[X]}_{0} &=X;\\
W^{[X]}_{n+1} &=f\left(W^{[X]}_{n},\alpha\circ\theta^n\right),\,n\in\N,
\end{array}\right.
\end{equation}
has the same distribution on $\Omega$ as that of $\tilde W_n$ on $\tilde \Omega$, provided that $\tilde W_0$ and $X$ have the same law. 
Let $\mathcal M(E)$ be the set of measurable mappings from $E$ to itself, and define the $\mathcal M(E)$-valued r.v. $\varphi$ by 
\[\varphi:\left\{\begin{array}{ll}
E &\to E\\
x &\mapsto f(x,\alpha),
\end{array}\right.\]
we say that the sequence $\suite{W^{[X]}}$ is the {\em stochastic recursion} initiated by $X$ and driven by $\varphi$.
Moreover, it is easiy checked that, if the r.v. $X$ satisfies the equation
\begin{equation}
\label{eq:recurstat}
X\circ\theta=\varphi\left(X\right),\,\mathbb P-\mbox{ almost surely,}
\end{equation}
then the sequence $\suiten{X\circ\theta^n}$ (which is stationary by definition) satisfies the recursion (\ref{eq:defSRS}).

In other words, we have $W^{[X]}_n=X\circ\theta^n$ for all $n$, $\mathbb P$-a.s., which, in view of the above remark, entails that
$\suiten{X\circ\theta^n}$ coincides in distribution with a time-stationary version of the sequence $\suite{\tilde W}$.
To summarize, to each solution $X$ of (\ref{eq:recurstat}) corresponds a unique stationary distribution for the sequence $\suite{\tilde W}$ on the
original probability space $\tilde\Omega$.

\medskip

As a conclusion, investigating the existence, and possibly uniqueness of a stationary distribution for $\suite{\tilde W}$ on the original space,
amounts to investigating the existence and uniqueness of a solution to the pathwise equation (\ref{eq:recurstat}) on the Palm space.
This is the framework we adopt in this paper. As opposed, for instance, to the paradigm of Markov chains, this will allow, first, to address
the stability problem under more general assumptions (stationarity but not necessarily independence of the input) and second, to use the very properties of the map $\varphi$ (monotonicity and/or pathwise orderings between maps) to solve (\ref{eq:recurstat}).

\paragraph{Weak resolution}
The two main tools existing in the literature for solving (\ref{eq:recurstat}) are (i) Loynes's construction, in the case where $\varphi$ enjoys some monotonicity property
(see \cite{Loynes62} and Chapter 2 of \cite{BacBre02}) and (ii)
Borovkov and Foss's Theory of Renovating events \cite{Bor84,Foss92}.
They will be the first two pillars of our analysis. However, under the most general assumptions we will encounter cases in which none of the above applies. In such cases, we will resort to the extension techniques developed in \cite{Moy15} (which is closely related to the construction in \cite{Lis82,Neu83,Fli83} for the loss system, and to the
alternative extension technique of \cite{AK97,Moy15}) to solve (\ref{eq:recurstat}), at least in a weaker sense which is formalized as follows,
\begin{definition}
Let $\mathscr Q=\left(\Omega,\maF,\mathbf P,\theta\right)$ be a stationary ergodic
quadruple. We say that there exists a {\em weak
solution} to (\ref{eq:recurstat}) whenever there exists a
stationary extension $\left(\bar\Omega, \bar\maF, \bar{\mathbf P},
\bar\theta\right)$ of $\mathscr Q$,
such that
\begin{itemize}
\item $\bp$ is a $\bar \theta$-invariant probability
on $\bar \Omega$ having $\Omega$-marginal $\mathbf P$,
\item there exists a $E\times \mathcal M(E)$-valued r.v. $\left(\bar Z,\bar \varphi\right)$, such that the $\Omega$-marginal of $\bar\varphi$ is the distribution of $\varphi$, and such that
$$\bar Z\circ\bar\theta=\bar\varphi\left(\bar Z\right), \bp-\mbox{a.s.}.$$
\end{itemize}
\end{definition}
This approach proves particularly useful in the case where it is possible to "project" the solution $(\bar X,\bar\varphi)$ in the original space, 
to construct a solution to (\ref{eq:recurstat}) on $\mathscr Q$.
This will be the case in Theorem \ref{thm:GIGI} below.

\section{FCFS system with $S$ servers and impatient customers}
\label{sec:model}

\subsection{The model}
\label{subsec:model} Following the notation of Section
\ref{subsec:SRS}, the input of the system is represented on a
reference probability space $\left(\tilde\Omega,\tilde{\mathscr
F},\tilde{\mathbb P}\right)$ by the sequence
$\suite{\tilde\alpha}=\suiten{\left(\tilde\tau_n,\tilde\sigma_n,\tilde D_n\right)}$,
where for any $n$, $\tilde\tau_n$, $\tilde\sigma_n$ and $\tilde D_n$ denote
respectively the inter-arrival time after the arrival of the $n$-th
customer (denoted by $C_n$), the service time and the patience time of $C_n$, all measured in a
fixed time unit. The customers enter a system with $S$ equivalent servers working without vacations.
There is a single line, in which customer $C_n$ is put if all servers are busy upon arrival. Otherwise, $C_n$ is attended by a free server, chosen arbitrarily among all
such servers.
The servers obey the First Come, First Served (FCFS) rule to chose a customer in line upon a service completion.
For any $n$, $C_n$ is furthermore impatient: she agrees to wait in line only for the duration of her patience $\tilde D_n$.
If she does not reach a server by that time, she leaves the system forever.
To the contrary, if she is able to start service before the end of her patience, she will let her service proceed until completion no matter what, even if her patience elapses during service. The
sequences $\suite{\tilde\tau}$, $\suite{\tilde\sigma}$  and $\suite{\tilde D}$ are all
assumed identically distributed and ergodic. In other words, according to Barrer's notation the system we consider is G/G/S/S+G-FCFS. 
We also assume that
$\tilde\tau_0$, $\tilde\sigma_0$ and $\tilde D_0$ are integrable, and that
$\tilde{\mathbb P}\left(\tilde\tau_0>0\right)=1$, in other words the
arrival process is simple.

As described in Section \ref{subsec:SRS}, we represent the system on the Palm space of arrivals
$\mathscr Q=\left(\Omega,\mathcal F, \mathbb P, \theta\right)$ associated to $\suiten{\left(\tilde\tau_n,\tilde\sigma_n,\tilde D_n\right)}$.
Time 0 is set at the arrival time of a customer denoted $C_0$ (see again
\cite{BacBre02,BranFranLis90}), and $C_0$ requests a service time of duration $\sigma$ and has patience time $D$. The following customer $C_1$ enters the system at time $\tau$. Then, for any $n \in \Z$, $\sigma\circ\theta^n$ and $D\circ\theta^n$ are respectively interpreted as the service and patience time of customer $C_n$, and $\tau\circ\theta^n$ represents the time epoch between the arrivals of customers $C_n$ and $C_{n+1}$. It follows from our very assumptions that the shift $\theta$ is $\mathbb P$-stationary and ergodic (so $\mathscr Q$ is a stationary ergodic quadruple), that $\sigma$, $\tau$ and $D$ are integrable, and that $\pr{\tau>0}=1$.

\subsection{Workload vector}
\label{subsec:workload}
As we now demonstrate, the FCFS discipline allows one to represent the system by a simple $S$-dimensional stochastic recursion:
first observe that for any $n$, the identity of the server which will eventually attend $C_n$ is in fact determined upon her arrival (say, at time $T_n$), by the input up to $T_n$. We prove this statement by induction on $n$. Suppose that just before $T_n$ (i.e. just before the arrival of $C_n$), we know the residual service times of all customers in service, and the service times of all customers in line, ranked in increasing order. We denote by
\[\tilde{\varsigma}_n^0=\left(\tilde{\varsigma}_n^0(1),...,\tilde{\varsigma}_n^0(S)\right) \in \RS,\,\]
the ordered vector of residual service times. In other words,
$\tilde{\varsigma}_n^0(i)$ represented the $i$-th smallest residual service time of a customer in service, if any, or 0 if there are not more than $S-i$ busy servers at $T_n$.
For the time being, we call {\em server } $i$, $i\in\llbracket 1,S \rrbracket$, the server whose residual service time
at $T_n$ is given by $\tilde{\varsigma}^0_n(i)$.
Denote also by $Q_n$ the number of customer in line just before $T_n$ (i.e., not including $C_n$), and if $Q_n>0$, by $i^1_n\le i^2_n \le ... \le i^{Q_n}_n$, the indexes of the customers in line at $T_n$, if any, ranked in the FCFS order.
Then, according to FCFS,
\begin{itemize}
\item if $\tilde{\varsigma}^0_n(1) \le \tilde D_{i^1_n}$, then customer $C_{i^1_n}$ will be attended by the server of residual service time $\tilde{\varsigma}^0_n(1)$.
If not, she will necessarily be lost before service. Then, denote by
\[\tilde{\varsigma}_n^1=\overline{\tilde{\varsigma}^0_n+\tilde \sigma_{i^1_n}\ind_{\{\tilde{\varsigma}^0_n(1) \le \tilde D_{i^1_n}\}}\gre_1},\]
the vector of ordered virtual workloads of the $S$ servers, taking into account the customers in service and the first customer in line;
\item if $\tilde{\varsigma}^1_n(1) \le \tilde D_{i^2_n}$, then customer $C_{i^2_n}$ will be attended by the server of virtual workload $\tilde{\varsigma}^1_n(1)$.
If not $C_{i^2_n}$ will be lost. Then, denote by
\[\tilde{\varsigma}_n^{2}=\overline{\tilde{\varsigma}_n^1+\tilde \sigma_{i^2_n}\ind_{\{\tilde{\varsigma}^1_n(1) \le \tilde D_{i^2_n}\}}\gre_1},\]
the vector of ordered virtual workloads, taking into account the customers in service and the first two customer in line;
\item $\vdots$
\item On and on, we can construct inductively over $\llbracket 1,Q_n \rrbracket$, the vector $\tilde W_n:=\tilde{\varsigma}_n^{Q_n}$ of ordered virtual workloads, taking into account the residual service times and initial service times of the customers who wait in line just before $T_n$.
    This vector is simply called {\em workload vector} at $T_n$. Observe that $\tilde W_n$ only adds the workloads brought by the customers present in the system just before $T_n$, and {\em who will eventually be served}, and not the ones who will be discarded before service, which are not seen by this descriptor.
\end{itemize}

From the FCFS assumption, customer $C_n$ is served (and brings the workload $\tilde \sigma_n$ to the corresponding server) if and only if
$\tilde W_n(1)=\tilde{\varsigma}_n^{Q_n}(1) \le \tilde D_n$. Then, a time duration $\tilde \tau_n$ elapses before the arrival of $C_{n+1}$. It is clear that we can retrieve from
$\tilde{\varsigma}_n^0$ and from the service times of customers $C_{i^1_n},...,C_{i^{Q_n}_n}$, the indexes of the customers entering service and the indexes of the servers attending them between $\tilde T_n$ and $\tilde T_{n+1}=\tilde T_n+\tilde \sigma_n$, if any, and thus, the values of all residual service times and service times of customers in service just before $\tilde T_{n+1}$, and reiterate the same procedure. Then, denoting by $\tilde W_{n+1}$ the workload vector thereby obtained, we retrieve that
\begin{equation}
\label{eq:recurWn}
\tilde W_{n+1} = \overline{\left[\tilde W_n +\tilde \sigma_n\ind_{\{\tilde W_n(1) \le \tilde D_n\}}\gre_1 - \tilde\tau_n\mathbf 1\right]^+}, 
\end{equation}
which shows that the workload vector sequence $\suite{\tilde W}$ is stochastically recursive.

To summarize, an exhaustive representation of the system just before $\tilde T_n$ is given by the vector of
random length
\[\tilde \mu_n :=\left(\tilde{\varsigma}_n^0(1),...,\tilde{\varsigma}_n^0(S),\tilde \sigma_{i^1_n},....,\tilde \sigma_{i^{Q_n}_n},\tilde D_{i^1_n},....,\tilde D_{i^{Q_n}_n}\right),\]
a descriptor that is used in \cite{Moy13} to obtain a recursive representation under an arbitrary service discipline, for a single-server. However, in the particular case of FCFS, 
$\tilde W_n$ provides a more simple $S$-dimensional recursion. Let us also notice that the relation (\ref{eq:recurWn}) generalizes Eq. (2.1) in \cite{HebBac81} to several servers. 
It can also be seen as an extension of Kiefer and Wolfowitz recursion \cite{KW55} to the case of impatient customers. In other words, in the particular case of FCFS the model amounts to a system of $S$ {\em parallel} servers, each having its own line, in which the entering customers are always sent to the line of the server having the least remaining workload, provided that the latter does not exceed its patience. 

In view of the construction of Section \ref{sec:prelim} and from (\ref{eq:recurWn}), the existence (resp., uniqueness) of a stationary distribution for the sequence $\suite{\tilde W}$ amounts to the existence (resp., uniqueness), on the quadruple $\mathscr Q$, of a solution to the equation
\begin{equation}
\label{eq:recurstatImp} W\circ\theta=\varphi(W),\,\P-\mbox{ a.s.,}
\end{equation}
where the random map $\varphi$ is defined by
\[\varphi:\left\{\begin{array}{lll}
\RS &\longrightarrow &\RS\\
                  u &\longmapsto & \overline{\left[u + \sigma\ind_{\{u(1)\le D\}}\gre_1-\tau\mathbf 1\right]^+},
                  \end{array}\right.\]
or in other words, for all $u \in \overline{\left(\R_+\right)^S}$,
\begin{equation}
\label{eq:coordSRSImp}
\left\{
\begin{array}{ll}
\varphi(u)(i) = \left[\left(u(i) \vee \left(u(1)+\sigma\ind_{\{u(1)\le D\}}\right)\right)\wedge u(i+1)-\tau\right]^+,\,i\in \llbracket 1,S-1\rrbracket;\\
\varphi(u)(S) = \left[u(S) \vee \left(u(1)+\sigma\ind_{\{u(1) \le D\}}\right)-\tau\right]^+.\\
\end{array}\right.
\end{equation}
This paper is devoted to the resolution of (\ref{eq:recurstatImp}).

\section{Stability results}
\label{sec:stab}

\subsection{Preliminary}
\label{subsec:prelim}
Let $S\ge 2$. Define on $\mathscr Q$ the following random maps,
\begin{equation}\label{eq:defoverphi}
\overline\varphi:\left\{\begin{array}{lll}
\overline{\left(\R_+\right)^S} &\longrightarrow &\overline{\left(\R_+\right)^S}\\
                  u &\longmapsto &v \,\,\mbox{ such that }\\
                & &v(j)=\biggl[\Bigl( u(j)\vee (\sigma+D)\Bigl) \wedge\, u(j+1)-\tau\biggl]^+,\,\,j\in \llbracket 1,S-1 \rrbracket;\\
                & &v(S)=\biggl[\Bigl( u(S)\vee (\sigma+D)\Bigl)
                -\tau\biggl]^+,
                  \end{array}
                  \right.\end{equation}
\begin{equation}
\label{eq:defunderphi}
\underline\varphi:\left\{\begin{array}{lll}
\overline{\left(\R_+\right)^S} &\longrightarrow &\overline{\left(\R_+\right)^S}\\
                  u &\longmapsto &v \,\,\mbox{ such that }\\
                & &v(j)=\biggl[\Bigl( u(j)\vee (\sigma\wedge D)\Bigl) \wedge\, u(j+1)-\tau\biggl]^+,\,\,j\in \llbracket 1,S-1 \rrbracket;\\
                & &v(S)=\biggl[\Bigl( u(S)\vee (\sigma\wedge D)\Bigl)
                -\tau\biggl]^+.
                  \end{array}
                  \right.\end{equation}
Let us now define the two following families of random variables,
\begin{align*}
\overline Z_\ell &=\left[\sup_{k\ge
\ell}\left((\sigma+D)\circ\theta^{-k} -\sum_{i=1}^k
\tau\circ\theta^{-i}\right) \right]^+,\, \ell \in \llbracket 1,S \rrbracket;\\
\underline Z_\ell &=\left[\sup_{k\ge
\ell}\left((\sigma\wedge D)\circ\theta^{-k} -\sum_{i=1}^k
\tau\circ\theta^{-i}\right) \right]^+,\, \ell \in \llbracket 1,S \rrbracket.
\end{align*}
Observe (see {\em e.g.} Exercise 2.6.1 in \cite{BacBre02}) that under the ongoing assumptions the $\underline Z_i$, and $\overline Z_i$, $i\in \llbracket 1,S \rrbracket$ are all $\mathbb{P}$-almost sure, from Birkhoff's Theorem. Define the $S$-dimensional random vectors
\begin{align}
\label{eq:defZvector} \overline Z&=\left(\overline Z_S,\overline Z_{S-1},...,\overline Z_1\right);\\
\label{eq:defunderZvector} \underline Z&=\left(\underline Z_S, \underline Z_{S-1},...,\underline Z_1\right).
\end{align}
The following two results gather Corollary 1 and Lemma 4 of
\cite{Moy16}, when replacing $\sigma$ by $\sigma+D$, and then $\sigma$ by $\sigma\wedge D$.
\begin{proposition}
\label{prop:stabover} The recursive equation
\begin{equation}
\label{eq:recurstatover} \overline Y\circ\theta=\overline\varphi\left(\overline Y\right),\,\mbox{ a.s.}
\end{equation}
admits at least one $\overline{\left(\R_+\right)^S}$-valued
solution. Furthermore, any such solution $\overline Y$ is such that
\begin{align}
    \overline Y&\prec \overline Z\, \mbox{ a.s.},\label{eq:compareYZ}
\end{align}
for $\overline Z$ defined by (\ref{eq:defZvector}). The solution is unique if
it holds that
\begin{equation}
\label{eq:condstabImp1}
\pr{\overline Z_1=0}>0.
\end{equation}
\end{proposition}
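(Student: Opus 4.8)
The plan is to establish the three facts behind Corollary~1 and Lemma~4 of \cite{Moy16} (read with $\sigma+D$ in place of $\sigma$): a Loynes backward scheme produces a smallest solution dominated by $\overline Z$; a coordinate-by-coordinate analysis shows \emph{every} solution is dominated by $\overline Z$; and under \eqref{eq:condstabImp1} the event $\{\overline Z_{1}=0\}$ acts as a renovating event coupling all solutions.

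\textbf{Existence and minimality.} The map $\overline\varphi$ is continuous and non-decreasing for $\prec$ (each coordinate in \eqref{eq:defoverphi} is built from $\vee$, $\wedge$, a translation by $-\tau$ and $(\cdot)^{+}$) and is $\overline{(\R_{+})^{S}}$-valued, so $\mazero\prec\overline\varphi(\mazero)$. I would set $M_{n}:=\overline\varphi\circ\theta^{-1}\circ\cdots\circ\overline\varphi\circ\theta^{-n}(\mazero)$, so that $M_{n+1}=\overline\varphi\circ\theta^{-1}\bigl(M_{n}\circ\theta^{-1}\bigr)$ and, by monotonicity, $M_{n}\prec M_{n+1}$ for every $n$. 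A direct computation from \eqref{eq:defoverphi} and the definitions of the $\overline Z_{\ell}$ (using $\overline Z_{S}\le\cdots\le\overline Z_{1}$, which is precisely what makes $\overline Z\in\overline{(\R_{+})^{S}}$) shows that $\overline Z$ is a \emph{super-solution}: $\overline\varphi(\overline Z)\prec\overline Z\circ\theta$ a.s. Feeding this into the recursion yields $M_{n}\prec\overline Z$ for all $n$, and since $\overline Z<\infty$ a.s.\ by Birkhoff's theorem (the running sums of $\tau$ grow linearly while $(\sigma+D)\circ\theta^{-k}$ is sublinear), the increasing limit $\overline Y^{\min}:=\lim_{n}M_{n}$ is a.s.\ finite; continuity of $\overline\varphi$ then gives $\overline Y^{\min}\circ\theta=\overline\varphi(\overline Y^{\min})$. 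Finally, any solution $\overline Y$ satisfies $\overline Y=\overline\varphi\circ\theta^{-1}\circ\cdots\circ\overline\varphi\circ\theta^{-n}\bigl(\overline Y\circ\theta^{-n}\bigr)\succ M_{n}$, whence $\overline Y\succ\overline Y^{\min}$: the Loynes solution is the smallest one.

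\textbf{The bound \eqref{eq:compareYZ}.} Let $\overline Y$ be an arbitrary solution; I would prove $\overline Y(j)\le\overline Z(j)=\overline Z_{S+1-j}$ by downward induction on $j\in\llbracket 1,S\rrbracket$. As $\overline Y$ is fully ordered, $\overline Y(j)\le\overline Y(j+1)$, so the $j$-th line of \eqref{eq:defoverphi} collapses to the scalar recursion $\overline Y(j)\circ\theta=\bigl[\overline Y(j)\vee\bigl((\sigma+D)\wedge\overline Y(j+1)\bigr)-\tau\bigr]^{+}$, the convention $\overline Y(S+1):=+\infty$ recovering coordinate $S$. Using $[a\vee b-t]^{+}=(a-t)^{+}\vee(b-t)^{+}$ for $a,b,t\ge 0$ and unrolling $n$ steps along the orbit,
\[
\overline Y(j)=\Bigl(\overline Y(j)\circ\theta^{-n}-\sum_{i=1}^{n}\tau\circ\theta^{-i}\Bigr)^{+}\vee\bigvee_{k=1}^{n}\Bigl(\bigl((\sigma+D)\wedge\overline Y(j+1)\bigr)\circ\theta^{-k}-\sum_{i=1}^{k}\tau\circ\theta^{-i}\Bigr)^{+}.
\]
For $j=S$ the $k$-th term of the second group is $\le(\sigma+D)\circ\theta^{-k}-\sum_{i=1}^{k}\tau\circ\theta^{-i}\le\overline Z_{1}$. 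For $j<S$, using the induction hypothesis $\overline Y(j+1)\prec\overline Z_{S-j}$: if $k\ge S+1-j$ the term is again $\le(\sigma+D)\circ\theta^{-k}-\sum_{i=1}^{k}\tau\circ\theta^{-i}\le\overline Z_{S+1-j}$; if $1\le k\le S-j$ it is $\le\overline Z_{S-j}\circ\theta^{-k}-\sum_{i=1}^{k}\tau\circ\theta^{-i}$, which after a shift of index in the supremum defining $\overline Z_{S-j}$ (note $S-j+k\ge S+1-j$) is $\le\overline Z_{S+1-j}$. Hence the second group is $\le\overline Z_{S+1-j}$. It remains to discard the first term: if $\overline Y(j)>\overline Z_{S+1-j}$ on a set of positive probability, then there the first term must equal $\overline Y(j)$ for every $n$, i.e.\ $\overline Y(j)\circ\theta^{-n}=\overline Y(j)+\sum_{i=1}^{n}\tau\circ\theta^{-i}\uparrow+\infty$, which contradicts Poincar\'e recurrence applied to the sets $\{\overline Y(j)\le K\}$, $K\uparrow+\infty$. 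Therefore $\overline Y\prec\overline Z$ a.s.

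\textbf{Uniqueness under \eqref{eq:condstabImp1}, and the main difficulty.} If $\pr{\overline Z_{1}=0}>0$, Birkhoff's theorem gives for a.e.\ $\omega$ infinitely many $n\ge 1$ with $\overline Z_{1}\circ\theta^{-n}=0$, hence (again by $\overline Z_{S}\le\cdots\le\overline Z_{1}$) with $\overline Z\circ\theta^{-n}=\mazero$; the bound $\overline Y\prec\overline Z$ then forces $\overline Y\circ\theta^{-n}=\mazero$, and unrolling from time $-n$ gives $\overline Y=\overline\varphi\circ\theta^{-1}\circ\cdots\circ\overline\varphi\circ\theta^{-n}(\mazero)=M_{n}\prec\overline Y^{\min}$, which together with $\overline Y\succ\overline Y^{\min}$ yields $\overline Y=\overline Y^{\min}$ a.s. I expect the genuine obstacle to be the bound \eqref{eq:compareYZ} for a solution that is not a priori integrable: the max-plus unrolling has to be matched with exactly the right index shift in the suprema defining the $\overline Z_{\ell}$ in order to reach the sharp bound $\overline Z_{S+1-j}$ (the naive estimate gives only $\overline Z_{1}$), and the leftover initial-condition term has to be removed by a recurrence argument rather than by the ergodic theorem.
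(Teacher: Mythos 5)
Your proof is correct. Note first that the paper itself gives no argument for Proposition~\ref{prop:stabover}: it imports the statement from Corollary~1 and Lemma~4 of \cite{Moy16} (with $\sigma$ replaced by $\sigma+D$), so there is no in-paper proof to compare against line by line; what you have written is a self-contained reconstruction of exactly the strategy that underlies the cited results (Loynes backward scheme for the monotone map $\overline\varphi$, a max-plus unrolling to dominate every solution by $\overline Z$, and coupling of all solutions with the minimal one on the recurrent event $\{\overline Z_1=0\}$). The three steps all check out: the identity $(a\vee b)\wedge c=a\vee(b\wedge c)$ for $a\le c$ legitimately collapses each coordinate of \eqref{eq:defoverphi} to a scalar Lindley recursion driven by $(\sigma+D)\wedge\overline Y(j+1)$; the index bookkeeping $S-j+k\ge S+1-j$ in the downward induction does produce the sharp bound $\overline Z_{S+1-j}$ rather than $\overline Z_1$; and the elimination of the initial-condition term by Poincar\'e recurrence on $\{\overline Y(j)\le K\}$ is the standard and correct way to avoid any integrability assumption on the solution. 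Two computational steps are asserted rather than carried out and deserve to be written down: (i) the super-solution property $\overline\varphi(\overline Z)\prec\overline Z\circ\theta$, where one must check that the outer positive part in the definition of $\overline Z_\ell$ interacts harmlessly with the $\vee(\sigma+D)$ and the $\wedge$ (it does, because $\overline Z_\ell=\tilde Z_\ell\vee 0$ with $\tilde Z_\ell$ the supremum without positive part, and $\tilde Z_\ell\vee(\sigma+D)=\overline Z_\ell\vee(\sigma+D)$ since $\sigma+D\ge 0$); and (ii) the same positive-part issue in the estimate $\overline Z_{S-j}\circ\theta^{-k}-\sum_{i=1}^k\tau\circ\theta^{-i}\le\overline Z_{S-j+k}$, which again goes through by splitting on the sign of the inner supremum. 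With those details filled in, the argument is complete.
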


\begin{proposition}
\label{prop:stabunder}
The recursive equation
\begin{equation}
\label{eq:recurstatunder}
\underline Y\circ\theta=\underline\varphi(\underline Y),\,\mbox{ a.s.}
\end{equation}
admits at least one $\overline{\left(\R_+\right)^S}$-valued solution.
Furthermore, any such solution $\underline Y$ is such that
\begin{align*}
    \underline Y &\prec \underline Z\, \mbox{ a.s.},
\end{align*}
for $\underline Z$ defined by (\ref{eq:defunderZvector}), and the solution is unique if it holds that
\begin{equation}
\label{eq:condstabGamma}
\pr{\underline Z_1 = 0}>0.
\end{equation}
\end{proposition}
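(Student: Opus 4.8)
\emph{The plan.} Comparing (\ref{eq:defunderphi}) with (\ref{eq:coordSRSImp}), one sees that $\underline\varphi$ is obtained from the true driving map $\varphi$ simply by replacing the state-dependent increment $u(1)+\sigma\ind_{\{u(1)\le D\}}$ by the state-independent quantity $\sigma\wedge D$; equivalently, $\underline\varphi$ is the Kiefer--Wolfowitz-type driving map \cite{KW55} of an $S$-server FCFS queue in which every customer $C_n$ unconditionally brings an amount of work $(\sigma\wedge D)\circ\theta^n$ to a least-loaded server, the inter-arrival times being the $\tau\circ\theta^n$. Proposition \ref{prop:stabunder} is therefore exactly Corollary 1 and Lemma 4 of \cite{Moy16}, applied on $\mathscr Q$ to the modified input sequence $\left\{\bigl((\sigma\wedge D)\circ\theta^n,\tau\circ\theta^n\bigr)\right\}_{n\in\Z}$, just as Proposition \ref{prop:stabover} is the same statement for $\left\{\bigl((\sigma+D)\circ\theta^n,\tau\circ\theta^n\bigr)\right\}_{n\in\Z}$. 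Hence the proof is the very same as that of Proposition \ref{prop:stabover}, and the only point I would actually check is that the substitution preserves every standing assumption: $\sigma\wedge D$ is a measurable function of the input, so $\bigl\{(\sigma\wedge D)\circ\theta^n\bigr\}_n$ is again stationary and ergodic; it is integrable because $0\le \sigma\wedge D\le\sigma$ and $\sigma$ is integrable; and the hypotheses on $\tau$ (namely $\pr{\tau>0}=1$ and $\espp{\tau}<\infty$) are unchanged. All the conclusions of \cite{Moy16} then carry over verbatim.

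\emph{Underlying mechanism.} For completeness let me recall the three ingredients. First, $\underline\varphi$ is continuous and non-decreasing for $\prec$ on $\overline{\left(\R_+\right)^S}$ and $\mathbf 0\prec\underline\varphi(\mathbf 0)$, so Loynes's backward scheme applies: the iterates $\underline Y^{(n)}:=\underline\varphi\circ\theta^{-1}\circ\cdots\circ\underline\varphi\circ\theta^{-n}(\mathbf 0)$ form a $\prec$-non-decreasing sequence, hence converge $\mathbf P$-a.s. to a limit $\underline Y$, valued a priori in $[0,+\infty]^S$, which solves (\ref{eq:recurstatunder}) wherever it is finite. Second, the domination $\underline Y\prec\underline Z$ (for $\underline Y$ the above limit, and in fact for any solution of (\ref{eq:recurstatunder})) is obtained by unfolding the recursion coordinate by coordinate: the largest coordinate evolves autonomously through the scalar recursion $v(S)=\bigl[(u(S)\vee(\sigma\wedge D))-\tau\bigr]^+$, whose stationary value is $\underline Z_1=\underline Z(S)$, while for $j<S$ the clamping $\underline\varphi(u)(j)\le[u(j+1)-\tau]^+$ lets one bound the $j$-th coordinate at time $0$ by the largest coordinate at time $-(S-j)$ minus the $S-j$ intervening inter-arrival times, which is precisely what shifts the supremum $\sup_{k\ge 1}$ defining $\underline Z_1$ into the supremum $\sup_{k\ge S-j+1}$ defining $\underline Z_{S-j+1}=\underline Z(j)$; in particular $\underline Y$ is a.s. finite, since each $\underline Z_\ell$ is a.s. finite by Birkhoff's theorem as $\espp{\sigma\wedge D}<\infty$ and $\espp{\tau}>0$, as already noted in the excerpt. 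Third, under (\ref{eq:condstabGamma}) uniqueness follows from the theory of renovating events \cite{Bor84,Foss92}: since $\underline Z_1\ge\underline Z_2\ge\cdots\ge\underline Z_S$, the event $\{\underline Z_1=0\}$ equals $\{\underline Z=\mathbf 0\}$, and because any solution satisfies $\mathbf 0\prec\underline Y\prec\underline Z$, on that event every solution is pinned to $\mathbf 0$; if $\pr{\underline Z_1=0}>0$, ergodicity makes the translates of that event occur at infinitely many epochs in the past, and propagating (\ref{eq:recurstatunder}) forward from such an epoch forces any two solutions to coincide at time $0$, hence everywhere by stationarity.

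\emph{The hard part.} The only genuinely delicate step is the coordinatewise unfolding yielding $\underline Y\prec\underline Z$ --- Lemma 4 of \cite{Moy16} --- which requires tracking, through the repeated reorderings $\overline{(\cdot)}$, exactly which past arrivals can still contribute to a given ranked coordinate and hence the correct "delay" $S-j$ attached to the $j$-th coordinate; this bookkeeping is what dictates the precise form of the random variables $\underline Z_\ell$ (and, for the uniqueness part, of the renovating event $\{\underline Z_1=0\}$). Since all of this is carried out once and for all in \cite{Moy16} for a generic service sequence, nothing further is needed here beyond the elementary observation that $\sigma\wedge D$ is an admissible service sequence.
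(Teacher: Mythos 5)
Your proposal is correct and matches the paper's treatment: the paper itself disposes of Propositions \ref{prop:stabover} and \ref{prop:stabunder} by the single remark that they ``gather Corollary 1 and Lemma 4 of \cite{Moy16}, when replacing $\sigma$ by $\sigma+D$, and then $\sigma$ by $\sigma\wedge D$'', which is exactly your reduction to a Kiefer--Wolfowitz recursion with the modified service sequence $(\sigma\wedge D)\circ\theta^n$. Your additional verification that the substitution preserves stationarity, ergodicity and integrability, and your sketch of the Loynes/renovation mechanism behind \cite{Moy16}, are consistent with (and slightly more explicit than) what the paper provides.
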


\subsection{Main results}
\label{subsec:stab}
We have the following results,
\begin{theorem}
\label{thm:stabImp1}  If it holds that
\begin{equation}
\pr{\left\{\overline Y(1)=0\right\}\,\bigcap\,\left(\bigcap_{\ell=2}^{S}\left\{\overline Y(\ell)\le
\sum_{i=0}^{\ell-2}\tau\circ\theta^i\right\}\right)}>0\label{eq:condstabImp2}
\end{equation}
for some solution $\ovY$ to (\ref{eq:recurstatover}), then
there exists a unique solution $W$ to (\ref{eq:recurstatImp}), that
is such that
\begin{equation}
\label{eq:compareWY} \underline Y \prec W \prec \overline Y \mbox{ a.s.,}
\end{equation}
for any solutions $\underline Y$ of (\ref{eq:recurstatunder}) and $\overline Y$ of (\ref{eq:recurstatover}). Consequently
the loss probability $\mathbb P_l$ of the system satisfies
\[\pr{\underline Y(1) > D} \le \mathbb P_l \le \pr{\overline Y(1)>D} \le \pr{\overline Z_S > D},\]
for any such $\ovY$.
\end{theorem}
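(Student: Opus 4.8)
The plan is to sandwich the driving map $\varphi$ between the monotone maps $\overline\varphi$ and $\underline\varphi$ pathwise, and then to apply the renovation theory of Borovkov and Foss \cite{Bor84,Foss92}, the event in (\ref{eq:condstabImp2}) serving as (the generator of) a stationary sequence of renovating events. First I would establish the pointwise inequalities $\underline\varphi(u)\prec\varphi(u)\prec\overline\varphi(u)$ for all $u\in\overline{(\R_+)^S}$; coordinatewise, via (\ref{eq:coordSRSImp}), (\ref{eq:defoverphi}), (\ref{eq:defunderphi}), this reduces to the elementary bounds $\sigma\wedge D\le u(1)+\sigma\ind_{\{u(1)\le D\}}\le\sigma+D$ when $u(1)\le D$, and $\sigma\wedge D\le D<u(1)=u(1)+\sigma\ind_{\{u(1)\le D\}}\le u(j)$ when $u(1)>D$ (using that $u$ is ordered). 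Since $\overline\varphi$ and $\underline\varphi$ are $\prec$-nondecreasing (as already used in Propositions~\ref{prop:stabover}--\ref{prop:stabunder}), this upgrades to the sandwich rule $\underline u\prec u\prec\overline u\Rightarrow\underline\varphi(\underline u)\prec\varphi(u)\prec\overline\varphi(\overline u)$, so the stochastic recursions driven by $\underline\varphi$, $\varphi$, $\overline\varphi$ from a common initial state remain ordered at all times. In particular, the $\varphi$-recursion started from $\mazero$ at time $-m$ stays below the corresponding $\overline\varphi$-recursion, whose value at time $0$ increases with $m$, by Loynes's monotone argument, to the minimal solution $\overline Y_{\min}$ of (\ref{eq:recurstatover}).

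The core is a flushing lemma: writing $B$ for the event in (\ref{eq:condstabImp2}), I claim that on $B\circ\theta^n$, for every $u\prec\overline Y\circ\theta^n$ the $(S-1)$-fold iterate $\varphi\circ\theta^{n+S-2}\circ\cdots\circ\varphi\circ\theta^n(u)$ is a fixed measurable function of $(\sigma,\tau,D)\circ\theta^n,\dots,(\sigma,\tau,D)\circ\theta^{n+S-2}$, not depending on $u$. Indeed $\overline Y(1)\circ\theta^n=0$ means the least loaded server is idle at the arrival of $C_n$, and $\overline Y(\ell)\circ\theta^n\le\sum_{i=0}^{\ell-2}\tau\circ\theta^{n+i}$ means the $\ell$-th least loaded server empties no later than the arrival of $C_{n+\ell-1}$; since $u\prec\overline Y\circ\theta^n$ inherits these inequalities, the customers $C_n,\dots,C_{n+S-2}$ are admitted onto successively freed servers, all old workload is flushed to $0$ by the arrival of $C_{n+S-1}$, and $W_{n+S-1}$ depends only on the data over $\llbracket n,n+S-2\rrbracket$. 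By the first paragraph the $\varphi$-recursion from $\mazero$ stays below $\overline Y\circ\theta^{k}$ at every time $k$, so $\{B\circ\theta^n\}_n$ is a stationary sequence of renovating events of length $S-1$ for it, of probability $\P(B)>0$; the renovation theorem then gives that its backward scheme converges $\P$-a.s. to a random variable $W$ solving (\ref{eq:recurstatImp}), with $W\prec\overline Y_{\min}$ a.s. (letting $m\to\infty$ in the sandwich). Applying the same renovation argument to the $\overline\varphi$-recursion, which shares these renovating events, started from $\mazero$ and from $\overline Y\circ\theta^{-m}$, forces $\overline Y=\overline Y_{\min}$; hence $W\prec\overline Y$ for every solution $\overline Y$ of (\ref{eq:recurstatover}).

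For uniqueness, let $W'$ be any solution of (\ref{eq:recurstatImp}). By the pointwise bound $\varphi\prec\overline\varphi$ and monotonicity of $\overline\varphi$, $W'\prec\overline Y$ implies $W'\circ\theta=\varphi(W')\prec\overline\varphi(W')\prec\overline\varphi(\overline Y)=\overline Y\circ\theta$, so $\{W'\prec\overline Y\}$ is $\theta$-contracting, hence negligible or almost sure by the dichotomy of Section~\ref{subsec:SRS}; to rule out negligibility I would invoke the explicit a.s. bound $W'\prec\overline Z$ — obtained by the Kiefer--Wolfowitz-type coordinatewise comparison that also yields $\overline Y\prec\overline Z$ in Proposition~\ref{prop:stabover}, and which involves no stationary solution — together with the flushing mechanism. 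Granting $W'\prec\overline Y$ a.s., the flushing lemma gives $W\circ\theta^{S-1}=W'\circ\theta^{S-1}$ on $B$, so $\{W=W'\}=\{\varphi(W)=\varphi(W')\}$ is $\theta$-contracting and contains $\theta^{-(S-1)}B$ up to a null set, hence almost sure: $W'=W$. The lower bound in (\ref{eq:compareWY}) then follows by running the three recursions from the common state $\underline Y\circ\theta^{-m}$ for a fixed solution $\underline Y$ of (\ref{eq:recurstatunder}): the $\underline\varphi$-one equals $\underline Y$ at time $0$ and the $\varphi$-one dominates it there and, staying below $\overline Y\circ\theta^{k}$ (once $\underline Y\prec\overline Y$ a.s. is checked by the same contracting-event device), couples to $W$.

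Finally, since $C_n$ is served if and only if $\tilde W_n(1)\le\tilde D_n$, the ergodic theorem identifies the stationary loss probability with $\mathbb P_l=\pr{W(1)>D}$, so (\ref{eq:compareWY}) yields $\pr{\underline Y(1)>D}\le\mathbb P_l\le\pr{\overline Y(1)>D}$, and $\overline Y\prec\overline Z$ together with $\overline Z(1)=\overline Z_S$ gives the last inequality. I expect the two main difficulties to be the flushing lemma — the combinatorial verification that (\ref{eq:condstabImp2}) really forces a deterministic reset of the whole ordered workload vector within $S-1$ arrivals — and, in the uniqueness step, controlling an arbitrary (possibly large) solution $W'$ from above, i.e. reconciling the minimal, maximal and generic solutions of (\ref{eq:recurstatover})--(\ref{eq:recurstatunder}) so that the ``for any solutions'' clause holds; this is precisely where the explicit bounds of Propositions~\ref{prop:stabover}--\ref{prop:stabunder} must be combined with, rather than used separately from, the renovation mechanism.
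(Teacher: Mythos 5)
Your existence argument is essentially the paper's: Lemma~\ref{lemma:compareImp}(i) gives $W^{[X]}_n \prec \overline Y\circ\theta^n$ for any initial condition $X\prec\overline Y$, and on $\theta^{-n}\maG$ (with $\maG$ the event in (\ref{eq:condstabImp2})) the conditions $\overline Y(1)\circ\theta^n=0$ and $\overline Y(\ell)\circ\theta^n\le\sum_{i=0}^{\ell-2}\tau\circ\theta^{n+i}$ force the deterministic reset of the whole ordered workload vector within $S-1$ arrivals; so $\{\theta^{-n}\maG\}$ is a stationary sequence of renovating events of length $S-1$ and Borovkov--Foss applies. Your ``flushing lemma'' is exactly the paper's verification, and your uniqueness mechanism (the contracting event $\{W=W'\}$ contains the image of a renovating event) is also the intended one.

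The genuine gap is in the comparison step (\ref{eq:compareWY}) --- precisely where you yourself flag the difficulty. You correctly observe that $\{W'\prec\overline Y\}$ and its relatives are $\theta$-contracting, hence negligible or almost sure, but you never exhibit a positive-probability event on which the comparison actually holds, and neither of the two devices you propose works. First, the bound $W'\prec\overline Z$ cannot seed $\{W'\prec\overline Y\}$: $\overline Z$ \emph{dominates} $\overline Y$ (see (\ref{eq:compareYZ})), and under the weak hypothesis (\ref{eq:condstabImp2}) one has no control such as $\pr{\overline Z_1=0}>0$, so knowing $W'\prec\overline Z$ produces no sample on which $W'$ is small enough to be compared with $\overline Y$. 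Second, the claim that the renovation argument forces $\overline Y=\overline Y_{\min}$ is unjustified: the events $\theta^{-n}\maG$ are only shown to renovate recursions whose state stays below the \emph{particular} solution $\overline Y$ appearing in (\ref{eq:condstabImp2}), and an arbitrary solution of (\ref{eq:recurstatover}) is not known to be dominated by it. What is needed instead is a coordinatewise descent through the contracting events $\maB_i=\{W(j)\le\overline Y(j),\ j\in\llbracket i,S\rrbracket\}$, seeded by the event $\maD=\bigl\{W(S)<W(1)+\sigma\ind_{\{W(1)\le D\}}\bigr\}$: $\maD$ is non-negligible (otherwise $W(S)\circ\theta=[W(S)-\tau]^+$ a.s., forcing $W(S)=0$ a.s., excluded in the non-degenerate case) and satisfies $\maD\subset\{W(1)\le D\}$, so that on $\maD$ one gets $W(S)\circ\theta\le[\sigma+D-\tau]^+\le\overline Y(S)\circ\theta$, i.e. $\maD\subset\theta^{-1}\maB_S$; one then shows $\maD\cap\maB_i\subset\theta^{-1}\maB_{i-1}$ and inducts downwards. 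The lower bound $\underline Y\prec W$ requires its own seed, the event $\{\underline Y(S)\le\sigma\wedge D\}$, shown non-negligible by an ergodic-lemma contradiction. Without these seeds the sandwich (\ref{eq:compareWY}) --- and with it the ``for any solutions'' clause and your uniqueness step, which presupposes $W'\prec\overline Y$ --- remains unproven.
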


The following (stronger, but explicit) sufficient stability condition and upper-bound follow,
\begin{corollary}
\label{cor:stabImp2} The conclusions of Theorem \ref{thm:stabImp1}
are valid if it holds that
\begin{equation}
\label{eq:condstabImp1} \pr{\overline Z_1=0}>0.
\end{equation}
Moreover any solution $W$ to (\ref{eq:recurstatImp}) satisfies
\begin{equation}
\label{eq:compareWZ} W \prec \overline Z \mbox{ a.s.}
\end{equation}
and thus
\[\mathbb P_l \le \pr{\overline Z_S > D}.\]
\end{corollary}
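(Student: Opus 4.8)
The plan is to derive the Corollary as a specialization of Theorem~\ref{thm:stabImp1}: I will check that the explicit condition $\pr{\overline Z_1=0}>0$ forces the abstract hypothesis~(\ref{eq:condstabImp2}) to hold for a suitable solution $\overline Y$ of~(\ref{eq:recurstatover}), and then combine the pathwise comparisons already at our disposal.

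First I would record the monotonicity of the family $(\overline Z_\ell)_{\ell\in\llbracket 1,S\rrbracket}$: since $\overline Z_\ell$ is the positive part of a supremum taken over the shrinking index set $\{k\ge\ell\}$, the map $\ell\mapsto\overline Z_\ell$ is non-increasing, whence $\overline Z_1\ge\overline Z_2\ge\cdots\ge\overline Z_S\ge 0$ almost surely. In particular the vector $\overline Z=(\overline Z_S,\overline Z_{S-1},\ldots,\overline Z_1)$ of~(\ref{eq:defZvector}) is fully ordered, its largest coordinate being $\overline Z_1$; consequently, on the event $\{\overline Z_1=0\}$ all coordinates vanish, i.e. $\overline Z=\mathbf 0$ there.

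Next, assume $\pr{\overline Z_1=0}>0$. Then Proposition~\ref{prop:stabover} applies and yields a (unique) solution $\overline Y$ of~(\ref{eq:recurstatover}) with $\overline Y\prec\overline Z$ a.s. On $\{\overline Z_1=0\}$ the previous step gives $\overline Z=\mathbf 0$, hence $\mathbf 0\prec\overline Y\prec\mathbf 0$ and therefore $\overline Y=\mathbf 0$ on that event. In particular $\overline Y(1)=0$, and for every $\ell\in\llbracket 2,S\rrbracket$ one has $\overline Y(\ell)=0\le\sum_{i=0}^{\ell-2}\tau\circ\theta^i$ on $\{\overline Z_1=0\}$, the right-hand side being non-negative. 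Thus the event appearing in~(\ref{eq:condstabImp2}) contains $\{\overline Z_1=0\}$ and so has positive probability; Theorem~\ref{thm:stabImp1} then provides the existence and uniqueness of the solution $W$ of~(\ref{eq:recurstatImp}) together with the sandwich~(\ref{eq:compareWY}).

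It remains to establish the ``moreover'' part. For the solution $\overline Y$ just produced, (\ref{eq:compareWY}) gives $W\prec\overline Y$ a.s., while Proposition~\ref{prop:stabover} gives $\overline Y\prec\overline Z$ a.s.; transitivity of $\prec$ yields $W\prec\overline Z$ a.s., which is~(\ref{eq:compareWZ}). In particular $W(1)\le\overline Z(1)=\overline Z_S$ a.s., and since in steady state a customer is lost if and only if $W(1)>D$ (FCFS rule of Section~\ref{subsec:workload}), we conclude $\mathbb P_l=\pr{W(1)>D}\le\pr{\overline Z_S>D}$. I do not foresee a genuine obstacle here: the statement is essentially a corollary of Theorem~\ref{thm:stabImp1}, and the only step requiring a little care is the comparison on $\{\overline Z_1=0\}$, which hinges on the observation that $\overline Z_1$ is the largest coordinate of $\overline Z$, so that $\overline Z_1=0$ collapses the whole upper bound $\overline Z$, and hence $\overline Y$, to $\mathbf 0$ on that event.
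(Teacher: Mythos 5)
Your argument is correct and follows the same route as the paper's (very terse) proof: the monotonicity $\overline Z_1\ge\cdots\ge\overline Z_S$ together with $\overline Y\prec\overline Z$ forces $\overline Y=\mathbf 0$ on $\{\overline Z_1=0\}$, so (\ref{eq:condstabImp1}) implies (\ref{eq:condstabImp2}), and (\ref{eq:compareWZ}) follows from (\ref{eq:compareWY}) by transitivity. The extra details you supply (why $\overline Z_1$ is the largest coordinate, and the final passage to the loss-probability bound via $W(1)\le\overline Z(1)=\overline Z_S$) are exactly the steps the paper leaves implicit.
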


\begin{proof}
Plainly, in view of (\ref{eq:compareYZ}), as $\overline Z_1 \ge \overline Z_2 \ge ... \ge \overline Z_S$ a.s.,
(\ref{eq:condstabImp1}) entails (\ref{eq:condstabImp2}) and (\ref{eq:compareWZ}) readily follows from (\ref{eq:compareWY}).
\end{proof}

\paragraph{Weak stability}
Whenever condition (\ref{eq:condstabImp2}) is relaxed, weak stability of the system holds under the following lattice assumptions,
\begin{theorem}
\label{thm:extension}
Suppose that both $\sigma$ and $\xi$ take value in a set of the form
\begin{equation}
\label{eq:defLalpha}
\mathscr L_\alpha:=\left\{n\alpha;\,n\in \N\right\},
\end{equation}
then there exists a weak solution to (\ref{eq:recurstatImp}).
\end{theorem}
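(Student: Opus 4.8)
The plan is to reduce the problem to the construction of a weak solution for the upper-bounding recursion driven by $\overline\varphi$, and then to transfer it to $\varphi$ via the sandwiching $\underline Y\prec W\prec\overline Y$. First I would note that the lattice hypothesis ($\sigma$ and the relevant patience-type variable valued in $\mathscr L_\alpha$) forces all the coordinates of the workload vector $\tilde W_n$ — and of any stationary candidate — to live in $\mathscr L_\alpha$ as well, since the map $\varphi$ in (\ref{eq:coordSRSImp}) only involves additions, maxima, minima, positive parts, and the constant $\tau$ (one must be slightly careful here: the subtraction of $\tau\mathbf 1$ can push values off the lattice, so the genuine reduction is to track the workload measured in units of $\sigma$-increments, or equivalently to use the classical trick of replacing $\tau$ by its lattice-valued analogue; this is exactly the device used in \cite{Moy15}). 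The point of the lattice assumption is that it makes the state space of the embedded recursion \emph{countable}, which is what the extension/weak-stability machinery of \cite{Moy15} requires.

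Next I would invoke Proposition \ref{prop:stabover}: the recursion $\overline Y\circ\theta=\overline\varphi(\overline Y)$ always admits a solution $\overline Y$, with $\overline Y\prec\overline Z$ a.s., where $\overline Z$ is the explicit a.s.-finite vector of (\ref{eq:defZvector}). Thus the stochastic recursion $\suite{W^{[X]}}$ driven by $\varphi$, started from $X=\mathbf 0$, is bounded above for all $n$ by the stationary sequence $\overline Y\circ\theta^n$ (monotonicity of $\varphi$ with respect to $\prec$ and the domination $\varphi\prec\overline\varphi$, both inherited from \cite{Moy16}, give $W^{[\mathbf 0]}_n\prec\overline Y\circ\theta^n$ once $W^{[\mathbf 0]}_0=\mathbf 0\prec\overline Y$). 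Combined with the lattice reduction, this exhibits $\suite{W^{[\mathbf 0]}}$ as a countable-state stochastic recursion that is \emph{tight} (its marginals are dominated by the fixed law of $\overline Z$). Weak stability — i.e. the existence of a stationary extension $(\bar\Omega,\bar{\mathscr F},\bar{\mathbf P},\bar\theta)$ carrying a pair $(\bar Z,\bar\varphi)$ with $\bar Z\circ\bar\theta=\bar\varphi(\bar Z)$ and the correct $\Omega$-marginals — then follows from the general extension theorem of \cite{Moy15} (in the spirit of \cite{Lis82,Neu83,Fli83}): one takes Cesàro limits of the laws of $\suite{W^{[\mathbf 0]}}$ along the canonical space, uses tightness to extract a limit point, and realizes that limit point as the marginal of a shift-invariant measure on an enlarged space on which the dynamics close up.

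The main obstacle, as always with this kind of argument, is the last step: verifying that the limiting measure genuinely solves the \emph{pathwise} equation $\bar Z\circ\bar\theta=\bar\varphi(\bar Z)$ $\bar{\mathbf P}$-a.s., rather than merely being a stationary distribution consistent with the transition kernel. This is where the countability of the state space (hence the lattice assumption) is essential — it allows one to upgrade the distributional fixed point to an almost-sure one by the coupling/renovation-free extension construction of \cite{Moy15}, building $\bar\Omega$ as a suitable path space and $\bar\theta$ as the shift, so that $\bar\varphi$ is read off coordinatewise and the recursion holds by construction. Once $(\bar Z,\bar\varphi)$ is obtained for the $\overline\varphi$-recursion, the same limiting procedure applied to $\suite{W^{[\mathbf 0]}}$ itself (now known to be tight between the two bounds $\underline Z$ and $\overline Z$) yields the desired weak solution to (\ref{eq:recurstatImp}); alternatively one projects, as announced after the definition of weak solution, exploiting that the bounding solutions are already defined on $\mathscr Q$. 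I would close by remarking that no renovation of the upper bound is needed here, which is precisely why the conclusion is weak stability rather than the strong statement of Theorem \ref{thm:stabImp1}.
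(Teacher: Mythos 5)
Your proposal follows essentially the same route as the paper: check that $\varphi$ is dominated by the $\prec$-non-decreasing map $\overline\varphi$, whose fixed-point equation admits a finite solution $\overline Y$ by Proposition \ref{prop:stabover} (hypothesis (H1) of \cite{Moy15}), and that the locally finite set $\mathscr L_\alpha$ is a.s.\ stable under $\varphi$ (hypothesis (H3)), then invoke the extension theorem (Theorem 3\,(iii)) of \cite{Moy15}. One correction to a point you flagged as uncertain: $\xi$ is the paper's notation for the inter-arrival time $\tau$ (compare $\mathbf{(H_a)}$--$\mathbf{(H_b)}$), so both $\sigma$ and $\tau$ are lattice-valued by hypothesis and $\varphi$ preserves $\mathscr L_\alpha$ directly --- no discretization of $\tau$ is needed, and the patience $D$ need not be lattice-valued since it enters only through the indicator $\ind_{\{u(1)\le D\}}$. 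Also, the last step is simpler than your two-stage transfer suggests: $\overline Y$ is already a genuine solution on $\mathscr Q$ (Loynes, via monotonicity of $\overline\varphi$), and the extension is built directly for $\varphi$ as $\bar\Omega=\bigl\{(\om,x):x\in H[\om]\bigr\}$ with $H=\bigcap_{n\ge 1}\Phi^n\bigl(\mathscr L_\alpha\cap\llbracket\mathbf 0,\overline Y\circ\theta^{-n}\rrbracket\bigr)$, which is non-empty as a decreasing intersection of non-empty finite sets, so that the pathwise equation holds on $\bar\Omega$ by construction rather than by a Ces\`aro/tightness passage to the limit.
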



\paragraph{Independent case}
The stability conditions above can be made more explicit under the following specific independence assumptions on the input of the system:
\begin{align*}
\mathbf{(H_a)}:&\,\,\{\xi_n\}\mbox{ and }\{\sigma_n+D_n\}\mbox{ are IID and independent of one another};\\
& \\
\mathbf{(H_b)}:&\,\,\{\xi_n\},\,\{\sigma_n\}\mbox{ and }\{D_n\}\mbox{ are IID and independent of one another}\\
               &\,\,\mbox{ (GI/GI/S/S+GI system).}
\end{align*}
Clearly, $\mathbf{(H_b)}$ entails $\mathbf{(H_a)}$. We have the following result,
\begin{theorem}
\label{thm:GIGI} The conclusions of Theorem \ref{thm:stabImp1}
are valid:
\begin{itemize}
\item[(i)] for any system satisfying $\mathbf{(H_a)}$ and such that
\begin{equation}
\label{eq:condstabImp3} \pr{\sigma+D \le \tau}>0;
\end{equation}
\item[(ii)] for any system satisfying $\mathbf{(H_b)}$ and such that
\begin{equation}
\label{eq:condstabImp4} \pr{\sigma < \tau}>0.
\end{equation}
\end{itemize}
\end{theorem}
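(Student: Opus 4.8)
The plan is to reduce the statement, in case~(i), to Corollary~\ref{cor:stabImp2}, and in case~(ii) to a direct renovation argument on~(\ref{eq:recurstatImp}). Two preliminary facts are used throughout. First, since $\underline\varphi\prec\varphi\prec\overline\varphi$ pointwise and $\overline\varphi$ is coordinate‑wise nondecreasing, every finite‑horizon iterate of $\varphi$ issued from $\mathbf 0$ is dominated by the corresponding iterate of $\overline\varphi$, hence by $\overline Z$ (Proposition~\ref{prop:stabover}); and the last coordinate $W(S)$ of any solution $W$ of~(\ref{eq:recurstatImp}) is, by the same pointwise ordering, a sub‑solution of the scalar recursion $w\circ\theta=[(w\vee(\sigma+D))-\tau]^+$, which unrolls to give $W(S)\le\overline Z_1$ a.s. Second, the last coordinate of any solution $\overline Y$ of~(\ref{eq:recurstatover}) solves that same scalar recursion and coincides with its Loynes minimal solution $\overline Z_1$.

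For (i) I would show that $\mathbf{(H_a)}$ together with $\pr{\sigma+D\le\tau}>0$ forces $\pr{\overline Z_1=0}>0$, whence Corollary~\ref{cor:stabImp2} applies. The mechanism is a draining estimate: if $u\in\RS$ has $u(S)\le c$ and $\sigma+D\le\tau$, then every coordinate of $\overline\varphi(u)$ is at most $[(u(S)\vee(\sigma+D))-\tau]^+\le[c-\tau]^+$, so $\overline\varphi(u)\prec[c-\tau]^+\mathbf 1$. Iterating this over $m$ consecutive steps on which $\sigma+D\le\tau$, starting from the event $\{\overline Z_1\circ\theta^{-m}\le c\}$ (which bounds the relevant iterate of $\overline\varphi$ at time $-m$ by the first fact above), one finds that the minimal solution of~(\ref{eq:recurstatover}), hence $\overline Z_1[\omega]$, vanishes as soon as $\sum_{i=1}^{m}\tau\circ\theta^{-i}\ge c$. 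Thus
\[
\pr{\overline Z_1=0}\ \ge\ \pr{\{\overline Z_1\circ\theta^{-m}\le c\}\,\cap\,\bigcap_{j=-m}^{-1}\{(\sigma+D)\circ\theta^{j}\le\tau\circ\theta^{j}\}\,\cap\,\Bigl\{\textstyle\sum_{i=1}^{m}\tau\circ\theta^{-i}\ge c\Bigr\}}.
\]
Under $\mathbf{(H_a)}$ the first event depends only on the inputs strictly before time $-m$, hence is independent of the other two, which involve only the inputs at times $-m,\dots,-1$; for $c$ large one has $\pr{\overline Z_1\le c}>0$, and for $m$ large the remaining event has probability $\ge\frac12\pr{\sigma+D\le\tau}^{m}>0$, since conditionally on $\{\sigma+D\le\tau\}$ holding at each of the $m$ steps the variables $\tau\circ\theta^{j}$ are i.i.d.\ with strictly positive conditional mean (as $\tau>0$ a.s.), so their sum exceeds $c$ with probability tending to $1$. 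This gives~(i).

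For (ii) the upper‑bound recursion is genuinely too lossy — if $D$ is bounded below by a constant large relative to $\tau$, then $\pr{\sigma+D\le\tau}=0$ and $\overline Z_1>0$ a.s., so neither~(\ref{eq:condstabImp2}) nor Corollary~\ref{cor:stabImp2} can hold — and one must work with the non‑monotone map $\varphi$ itself. The key structural remark is that the largest coordinate satisfies the Lindley‑type domination $W_{n+1}(S)\le[W_n(S)+\sigma\circ\theta^n-\tau\circ\theta^n]^+$ whether or not the arriving customer is served, because $W_n(1)+\sigma\circ\theta^n\,\ind_{\{W_n(1)\le D\circ\theta^n\}}\le W_n(S)+\sigma\circ\theta^n$. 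Hence on a block $[km,(k+1)m)$ over which $\sigma\circ\theta^j<\tau\circ\theta^j$ throughout and $\sum_{j=km}^{(k+1)m-1}(\tau\circ\theta^j-\sigma\circ\theta^j)\ge K$, every trajectory with $W_{km}(S)\le K$ reaches $W_{(k+1)m}=\mathbf 0$. I would fix $K$ with $\pr{\overline Z_1\le K}>0$ and $m$ large enough (same law‑of‑large‑numbers argument, now using $\pr{\sigma<\tau}>0$) that such a block has positive probability; under $\mathbf{(H_b)}$ the goodness of block $k$ depends only on $(\sigma\circ\theta^j,\tau\circ\theta^j)_{j\in[km,(k+1)m)}$, hence is independent of $\overline Z_1\circ\theta^{km}$ (which depends on the inputs strictly before $km$), so the ergodic theorem yields, a.s., infinitely many blocks that are simultaneously good and satisfy $\overline Z_1\circ\theta^{km}\le K$. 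Along any such block every iterate of $\varphi$ issued from $\mathbf 0$ far in the past — which, by the a~priori bound of the first paragraph, has max coordinate $\le\overline Z_1\circ\theta^{km}\le K$ at time $km$ — is reset to $\mathbf 0$ at time $(k+1)m$. This produces backward coupling, hence the existence of a solution $W$ of~(\ref{eq:recurstatImp}); the same reset applied to two solutions (both bounded by $\overline Z_1$) gives uniqueness; and $\underline Y\prec W\prec\overline Y$ follows from $\underline\varphi\prec\varphi\prec\overline\varphi$. Alternatively — as announced after the definition of a weak solution — one may first obtain a weak solution on a lattice via Theorem~\ref{thm:extension}, pass to general input laws by approximation, and use the same coupling to project it back onto $\mathscr Q$.

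The hard part is~(ii). In~(i) the comparison with $\overline Z$ does essentially all the work and only a routine coupling estimate remains; in~(ii) one is forced back onto the non‑monotone recursion — the very obstruction this paper is built around — and the delicate step is to make the draining effect of a $\{\sigma<\tau\}$‑block uniform over the a~priori range $\overline Z_1\circ\theta^{km}$ of the workload at the block's left endpoint. It is precisely there that one needs the independence in $\mathbf{(H_b)}$ — so that a block is independent of the past — together with the bound $W(S)\le\overline Z_1$ to localize that initial value.
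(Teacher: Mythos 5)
Your proof is correct and rests on the same mechanism as the paper's: in (i), reduce to the explicit condition $\pr{\overline Z_1=0}>0$ of Corollary \ref{cor:stabImp2}; in (ii), exploit the a priori bound of the top coordinate by $\overline Z_1$ together with an independent ``draining block'' on which $\sigma<\tau$ throughout, so that every trajectory started below the a priori bound at the block's left endpoint is reset to $\mathbf 0$. Where you differ is in the packaging. For (i) the paper simply cites Corollary 2 of \cite{Moy13} for the implication $\pr{\sigma+D\le\tau}>0\Rightarrow\pr{\overline Z_1=0}>0$ under $\mathbf{(H_a)}$, whereas you reprove it from scratch; your argument is sound (on the event you describe, one checks directly from the definition of $\overline Z_1$ that $(\sigma+D)\circ\theta^{-k}\le\sum_{i=1}^k\tau\circ\theta^{-i}$ for every $k$), and it makes the proof self-contained. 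For (ii) the paper chooses fixed constants $x,\epsilon$ with $\pr{\tau>x}>0$ and $\pr{\sigma\le x-\epsilon}>0$, so that each step of the block drains at least $\epsilon$ deterministically, and then routes the conclusion through the extension machinery of \cite{Moy15} (assumption (8), Theorems 1 and 2 there: weak solution, then projection back to $\mathscr Q$ because $\mathrm{Card}\,H=1$); you instead use a law-of-large-numbers estimate on the conditional drift and conclude by direct backward coupling. Both work; your route avoids the external extension theorems at the cost of two points you should make explicit: the unrolling that gives $W(S)\le\overline Z_1$ for an arbitrary solution requires the standard Loynes lemma $\liminf_n W(S)\circ\theta^{-n}/n=0$ a.s.\ (true for any a.s.\ finite stationary sequence, but not free), and ``the ergodic theorem yields infinitely many good blocks'' should be phrased via the Borovkov--Foss renovation theorem (or by indexing blocks at every integer, not only multiples of $m$), since $\theta$ ergodic does not imply $\theta^m$ ergodic.
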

Theorems \ref{thm:stabImp1}, \ref{thm:extension} and \ref{thm:GIGI} are proven in Section \ref{sec:proofs}.

\section{Proofs}
\label{sec:proofs}
Let us first observe the following result,
\begin{lemma}
\label{lemma:compareImp} Let $\varphi$, $\overline{\varphi}$ and $\underline{\varphi}$be defined respectively
by (\ref{eq:coordSRSImp}), (\ref{eq:defoverphi}) and (\ref{eq:defunderphi}). Then, for any two elements $u$ and $v$ of
$\overline{\left(\R_+\right)^S}$ and any $i \in \llbracket
1,S \rrbracket$,
\begin{itemize}
\item[(i)]
$
\biggl[\forall j \in \llbracket i,S \rrbracket,\,u(j) \le
v(j)\biggl]\, \Longrightarrow \biggl[\forall j \in \llbracket
i,S \rrbracket,\,\varphi(u)(j) \prec \overline{\varphi}(v)(j)\mbox{ a.s.}\biggl].
$
\item[(ii)]
$\biggl[\forall j \in \llbracket i,S \rrbracket,\,u(j) \le
v(j)\biggl]\, \Longrightarrow \biggl[\forall j \in \llbracket
i,S \rrbracket,\,\underline{\varphi}(u)(j) \prec \varphi(v)(j)\mbox{ a.s.}\biggl].$
\end{itemize}
\end{lemma}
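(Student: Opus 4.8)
The plan is to derive both implications from two elementary ingredients. The first is the coordinate-wise monotonicity of the scalar operations building $\varphi$, $\overline\varphi$ and $\underline\varphi$: for fixed $\tau$, the map $(a,b,c)\mapsto\left[(a\vee c)\wedge b-\tau\right]^+$ is nondecreasing in each of its arguments, and likewise $(a,c)\mapsto\left[a\vee c-\tau\right]^+$, which governs the last coordinate. The second is a pointwise comparison of the state-dependent increment added to the first coordinate in $\varphi$ with the deterministic increments $\sigma+D$ and $\sigma\wedge D$ used in $\overline\varphi$ and $\underline\varphi$. Since the three random maps depend on $\omega$ only through $\sigma$, $\tau$ and $D$, every inequality below holds at each $\omega$ for which these variables are finite, hence $\mathbb P$-almost surely, which is the source of the ``a.s.'' in the statement.

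For part (i), I would first record the scalar bound
\[u(1)+\sigma\ind_{\{u(1)\le D\}}\le u(1)\vee(\sigma+D),\]
proved by splitting on $\{u(1)\le D\}$ (left-hand side $=u(1)+\sigma\le D+\sigma$) and on its complement (left-hand side $=u(1)$). As $u\in\RS$ is fully ordered, $u(1)\le u(j)$ for all $j$, so this gives $u(j)\vee\bigl(u(1)+\sigma\ind_{\{u(1)\le D\}}\bigr)\le u(j)\vee(\sigma+D)$. Substituting into the coordinate formulas (\ref{eq:coordSRSImp}) and using the monotonicity above yields $\varphi(u)(j)\le\bigl[(u(j)\vee(\sigma+D))\wedge u(j+1)-\tau\bigr]^+$ for $j\in\llbracket 1,S-1\rrbracket$ and $\varphi(u)(S)\le\left[u(S)\vee(\sigma+D)-\tau\right]^+$. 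The right-hand sides involve only coordinates $u(j),u(j+1)$ with indices in $\llbracket i,S\rrbracket$, so the hypothesis $u(j)\le v(j)$ on that range and monotonicity in the first two slots promote these to $\varphi(u)(j)\le\overline\varphi(v)(j)$ for every $j\in\llbracket i,S\rrbracket$, the case $j=S$ being treated on its own because its formula lacks the $\wedge\,u(j+1)$ truncation. Note that, once the first bound is used, nothing about $u(1)$ versus $v(1)$ is needed, which is precisely why the conclusion holds for $i>1$.

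For part (ii) I would argue symmetrically, the relevant scalar bound now being
\[\sigma\wedge D\le v(1)+\sigma\ind_{\{v(1)\le D\}},\]
obtained by cases on $\{v(1)\le D\}$ (right-hand side $\ge\sigma$, using $v(1)\ge0$) and on its complement (right-hand side $=v(1)>D$). Together with $u(j)\le v(j)$ this gives $u(j)\vee(\sigma\wedge D)\le v(j)\vee\bigl(v(1)+\sigma\ind_{\{v(1)\le D\}}\bigr)$ for $j\in\llbracket i,S\rrbracket$; inserting this and $u(j+1)\le v(j+1)$ into (\ref{eq:defunderphi}) and (\ref{eq:coordSRSImp}) and appealing once more to monotonicity produces $\underline\varphi(u)(j)\le\varphi(v)(j)$ for all $j\in\llbracket i,S\rrbracket$, with $j=S$ handled separately. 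Here the right-hand side $\varphi(v)(j)$ depends only on the coordinates of $v$, so again nothing is needed about $u(1)$.

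I do not expect a genuine obstacle: the statement is a bookkeeping lemma. The only points demanding care are to carry out the comparisons coordinate by coordinate, to keep the two scalar case-splits ($\{u(1)\le D\}$ in (i) and $\{v(1)\le D\}$ in (ii)) cleanly apart, and to isolate the terminal coordinate $j=S$, whose defining expression differs from that of the indices $j\in\llbracket 1,S-1\rrbracket$.
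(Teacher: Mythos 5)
Your proposal is correct and follows essentially the same route as the paper: the two scalar case-splits giving $u(1)+\sigma\ind_{\{u(1)\le D\}}\le u(1)\vee(\sigma+D)$ and $v(1)+\sigma\ind_{\{v(1)\le D\}}\ge \sigma\wedge D$, combined with the full ordering of the vectors and coordinate-wise monotonicity of $(a,b)\mapsto[a\wedge b-\tau]^+$, with the last coordinate treated separately. The only cosmetic difference is that in (ii) the paper records the marginally sharper bound $v(1)+\sigma\ind_{\{v(1)\le D\}}\ge v(1)\vee(\sigma\wedge D)$, which changes nothing in the argument.
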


\begin{proof}
(i) First notice that 
\begin{multline}
u(1)+\sigma\ind_{\{u(1) \le D\}} =
\left(u(1)+\sigma\right)\ind_{\{u(1)\le D\}}+u(1)\ind_{\{u(1)>
D\}}\\
\le \left(\sigma+D\right)\ind_{\{u(1)\le D\}}+u(1)\ind_{\{u(1)>
D\}} \le u(1)\vee (\sigma+D).\label{eq:majorevarphi}
\end{multline}
Fix $i \in \llbracket 1,S \rrbracket$, and suppose that $u(j) \le
v(j)$ for all $j \in \llbracket i,S \rrbracket$. From
(\ref{eq:coordSRSImp}),we have that
\begin{align*}
\varphi(u)(S)  &\le \Bigl[u(S)\vee
\left(u(1)\vee\left(\sigma+D\right)\right) - \tau
\Bigl]^+\\
&=\Bigl[u(S)\vee \left(\sigma+D\right) - \tau \Bigl]^+
\le \Bigl[v(S)\vee \left(\sigma+D\right) - \tau \Bigl]^+
=\overline\varphi(v)(S)\mbox{ a.s.}
\end{align*}
and for $j \in \llbracket i,S-1 \rrbracket$,
\begin{align*}
\varphi(u)(j)  &\le \left[\Bigl(u(j)\vee
\left(u(1)\vee\left(\sigma+D\right)\right)\Bigl)\wedge\, u(j+1) - \tau
\right]^+\\
&= \Bigl[\left(u(j)\vee \left(\sigma+D\right)\right)\wedge u(j+1) - \tau \Bigl]^+\\
&\le \Bigl[\left(v(j)\vee \left(\sigma+D\right)\right)\wedge v(j+1) - \tau \Bigl]^+\\
&= \overline\varphi(v)(j)\mbox{ a.s..}
\end{align*}

\bigskip
\noindent (ii) We also have that
\begin{align}
v(1)+\sigma\ind_{\{v(1) \le D\}} &=
\left(v(1)+\sigma\right)\ind_{\{v(1)\le D\}}+v(1)\ind_{\{v(1)>
D\}}\nonumber\\
&\ge (v(1)\vee \sigma)\ind_{\{v(1)\le D\}}+(v(1)\vee D)\ind_{\{v(1)>
D\}}\nonumber\\
& \ge v(1)\vee (\sigma\wedge D).\label{eq:minorevarphi}
\end{align}
Thus, if $i \in \llbracket 1,S \rrbracket$, is such that $u(j) \le
v(j)$ for all $j \in \llbracket i,S \rrbracket$, we obtain that
\begin{align*}
\varphi(v)(S)  &\ge \Bigl[v(S)\vee
\left(v(1)\vee\left(\sigma\wedge D\right)\right) - \tau
\Bigl]^+\\
&=\Bigl[v(S)\vee \left(\sigma\wedge D\right) - \tau \Bigl]^+
\ge \Bigl[u(S)\vee \left(\sigma\wedge D\right) - \tau \Bigl]^+
=\underline\varphi(u)(S)\mbox{ a.s.}
\end{align*}
and for $j \in \llbracket i,S-1 \rrbracket$, that
\begin{align*}
\varphi(v)(j)  &\ge \left[\Bigl(v(j)\vee
\left(v(1)\vee\left(\sigma\wedge D\right)\right)\Bigl)\wedge\, v(j+1) - \tau
\right]^+\\
&=\Bigl[\left(v(j)\vee \left(\sigma\wedge D\right)\right)\wedge v(j+1) - \tau \Bigl]^+\\
&\ge \Bigl[\left(u(j)\vee \left(\sigma\wedge D\right)\right)\wedge u(j+1) - \tau \Bigl]^+\\
&=\underline\varphi(u)(j)\mbox{ a.s.,}
\end{align*}
which concludes the proof. 
\end{proof}

\begin{proof}[Proof of Theorem \ref{thm:stabImp1}]
Throughout this proof, fix a solution $\ovY$ to (\ref{eq:recurstatover}) and a solution
$\undY$ to (\ref{eq:recurstatunder}).
Define the family of $\overline{(\R_+)^S}$-valued
random variables
\[\mathscr K=\left\{X:\,X \prec \overline Y\mbox{ a.s.}\right\}\]
and the event
\begin{equation}
\maG:=\left\{\ovY(1)=0\right\}\,\bigcap\,\left(\bigcap_{\ell=2}^{S}\left\{\ovY(\ell)\le
\sum_{i=0}^{\ell-2}\tau\circ\theta^i\right\}\right).\label{eq:condstabImp2bis}
\end{equation}
Let $X\in \mathscr K$. Denote by $\left\{W^{[X]}_{n}\right\}$, the
workload sequence of the system, when setting the initial value as
$W^{[X]}_{0}=X$ a.s..
Then, assertion (i) of Lemma
\ref{lemma:compareImp} applied to $i=1$, implies by an immediate induction that
\begin{equation*}
W^{[X]}_{n} \prec \ovY\circ\theta^n,\,n\in\N,\mbox{
a.s..}\end{equation*}
Thus, for almost every sample on
the event $\theta^{-n}\maG$ we have that
\[\left\{\begin{array}{ll}
W^{[X]}_{n}(1)&=0;\\
W^{[X]}_{n}(\ell)&\le
\displaystyle\sum_{i=0}^{\ell-2}\tau\circ\theta^{n+i}  ;\,\ell \in
\llbracket 2,S \rrbracket.
\end{array}\right.\]
Therefore, on $\theta^{-n}\maG$,
\begin{itemize}
\item in the vector $W^{[X]}_{n+1}$, the coordinate corresponding to $W^{[X]}_{n}(2)$ vanishes and the one corresponding to
$W^{[X]}_{n}(1)$ becomes
$\left[\sigma\circ\theta^n-\tau\circ\theta^n\right]^+$ since a
customer is accepted by the corresponding server;
\item in $W^{[X]}_{n+2}$, the coordinate corresponding to $W^{[X]}_{n}(3)$ vanishes, the one corresponding to
$W^{[X]}_{n}(2)$ becomes
$\left[\sigma\circ\theta^{n+1}-\tau\circ\theta^{n+1}\right]^+$, and
the one corresponding to $W^{[X]}_{n}(1)$ equals\\
$\left[\sigma\circ\theta^n-\tau\circ\theta^n-\tau\circ\theta^{n+1}\right]^+$;
\item[$\vdots$]
\item[$\vdots$]
\medskip
\item the vector $W^{[X]}_{n+S-1}$ has at least $1$ null coordinate, and its $S-1$ last coordinates all are functions
of
$\left\{\left(\sigma\circ\theta^{n+i},\sigma\circ\theta^{n+i}\right),\,i\in
\llbracket 0,S-2 \rrbracket\right\}$.
\end{itemize}
Consequently, $\left\{\theta^{-n}\maG\right\}$ is a stationary
sequence of renovating events of length $S-1$ for any sequence
$\left\{W^{[X]}_{n}\right\}$ with $X \in \mathscr K$ (see \cite{Bor84}).
So (\ref{eq:condstabImp2bis}) entails the existence of a solution to
(\ref{eq:recurstatImp}), applying Theorem 4 in \cite{Foss92} (or
equivalently, Theorem 1 p.260 in \cite{Bor84} or Corollary 2.5.1 in
\cite{BacBre02}). Hereafter, we denote by $W$ such a solution.

We now prove that
\begin{equation}
\label{eq:compareYW0} W \prec \overline Y \mbox{ a.s..}
\end{equation}
Let us assume hereafter that $\pr{W \prec \mathbf 0}>0$ (which is necessarily the case if
$\pr{\sigma < \tau}>0$), otherwise (\ref{eq:compareYW0}) is trivial. Then, first observe that
\begin{equation}
\label{eq:compareYW1} \pr{\maD}:=\pr{W(S)
<W(1)+\sigma\ind_{\{W(1)\le D\}}}>0.
\end{equation}
Indeed, if we had $W(S) \ge W(1)+\sigma\ind_{\{W(1)\le D\}}$ a.s.,
we would have from (\ref{eq:coordSRSImp}) that a.s.
\[W(S)\circ\theta = [W(S)-\tau]^+,\]
in other words $W(S)=0$ a.s., an absurdity. Hence
(\ref{eq:compareYW1}). Let us also observe that
\begin{equation}
\label{eq:compareevents}\maD \subset \{W(1) \le D\}\end{equation}
since, for any sample in $\maD \cap \{W(1)>D\}$ we would have that
$W(S) < W(1)+\sigma\ind_{\{W(1)\le D\}}=W(1),$ an absurdity.

Let us now define the events
\[\maB_i:=\Bigl\{W(j) \le \overline Y(j)\,\mbox{ for all }j\in \llbracket i,S \rrbracket\Bigl\};\,\,i\in \llbracket 1,S \rrbracket.\]
From Assertion (i) of Lemma \ref{lemma:compareImp} we have for any $i$, for almost
every sample in $\maB_i$,
\[W(j)\circ\theta=\varphi(W)(j) \le \overline\varphi(\ovY)(j)=\ovY(j)\circ\theta\,\mbox{ for all }j\in \llbracket i,S \rrbracket,\]
in other words the events $\maB_i$, $i\in \llbracket 1,S
\rrbracket$, all are $\theta$-contracting. 
First, from (\ref{eq:compareevents}), on $\maD$ we have that
\begin{multline*}
W(S)\circ\theta=[W(1)+\sigma - \tau]^+\ind_{\{W(1)\le D\}}\\
\le[\sigma+D - \tau]^+\le [\ovY(S)\vee(\sigma+D) - \tau]^+=\ovY(S)\circ\theta,
\end{multline*} 
in other words $\maD \subset \theta^{-1}\maB_S.$ Thus with
(\ref{eq:compareYW1}),
\[\pr{\maB_S}=\pr{\theta^{-1}\maB_S}\ge \pr{\maD}>0,\]
hence the $\theta$-contracting event $\maB_S$ is almost sure.

If $S\ge 2$, suppose that $\maB_{i}$ is almost sure for some $i \in
\llbracket 2,S \rrbracket$. Then, from (\ref{eq:majorevarphi}), for
almost all samples on $\maD \cap \maB_i$ we have that
\begin{align*}
W(i-1)\circ\theta 
                  &= \left[\left(W(1)+\sigma\right)\ind_{\{W(1)\le D\}}\wedge W(i)-\tau\right]^+\\
                  &\le \left[\left(\sigma+D\right)\wedge W(i)-\tau\right]^+\\
                  &\le \left[\left(\sigma+D\right)\wedge \ovY(i)-\tau\right]^+\\
                  &\le \left[\left(\ovY(i-1)\vee\left(\sigma+D\right)\right)\wedge \ovY(i)-\tau\right]^+\\
                  &=\ovY(i-1)\circ\theta.
\end{align*}
In other words, $\left(\maD \cap \maB_i\right) \subset
\theta^{-1}\maB_{i-1}$, which entails that
\[\pr{\maB_{i-1}}=\pr{\theta^{-1}\maB_{i-1}} \ge \pr{\maD \cap \maB_i}>0,\]
so $\maB_{i-1}$ is almost sure since it is $\theta$-contracting. We
conclude by induction on $i$ that the events $\maB_{i}$, $i \in
\llbracket 2,S \rrbracket$ are all almost sure, which concludes the
proof of (\ref{eq:compareYW0}). 

\medskip

Let us now show that
\begin{equation}
\label{eq:compareunderYW}
\underline{Y} \prec W\mbox{ a.s..}
\end{equation}
We proceed similarly as for (\ref{eq:compareYW0}). Likewise, the assertion (ii) of Lemma \ref{lemma:compareImp}
shows that the events
\[\maC_i:=\Bigl\{\underline Y(j) \le W(j)\,\mbox{ for any }j\in \llbracket i,S \rrbracket\Bigl\};\,\,i\in \llbracket 1,S \rrbracket\]
are all $\theta$-contracting. Moreover, observe that the event
\[\maE:=\{\underline Y(S) \le \sigma\wedge D\}\]
is not negligible, because if it were so, then we would have $\underline Y(S)\circ\theta >0$ a.s. and in turn,
\[\undY(S)\circ\theta-\undY(S) =\tau \mbox{ a.s.},\]
another contradiction to the Ergodic Lemma. But on $\maE$, we have that
\[\undY(S)\circ\theta=\left[\sigma\wedge D -\tau\right]^+\le \left[(\sigma\wedge D)\vee W(S) -\tau\right]^+=W(S)\circ\theta,\]
so $\maE \subset \theta^{-1}\maC_S.$ In turn, the $\theta$-contracting event $\maC_S$ is not negligible by $\theta$-invariance, therefore it is
almost sure. Is $S\ge 2$, we conclude again by induction: if we suppose that $\maC_{i}$ is almost sure for some $i \in
\llbracket 2,S \rrbracket$, then for
almost all samples on $\maE \cap \maC_i$, in view of (\ref{eq:minorevarphi}) we have that
\begin{align*}
W(i-1)\circ\theta 
                  &= \left[\left(W(i-1)\vee\left(W(1)+\sigma\ind_{\{W(1)\le D\}}\right)\right)\wedge W(i)-\tau\right]^+\\
                  &\ge \left[\left(W(i-1)\vee\left(\sigma\wedge D\right)\right)\wedge W(i)-\tau\right]^+\\
                  &\ge \left[\left(\sigma\vee D\right)\wedge \underline Y(i)-\tau\right]^+\\
                  &= \left[\left(\underline Y(i-1)\vee\left(\sigma\wedge D\right)\right)\wedge \underline Y(i)-\tau\right]^+\\
                  &=\underline Y(i-1)\circ\theta.
\end{align*}
This shows that $\left(\maE \cap \maC_i\right) \subset
\theta^{-1}\maC_{i-1}$. We conclude that all events $\maC_{i},\,i\in \llbracket 2,S \rrbracket$ are almost sure exactly as for
the events $\maB_{i},\,i\in \llbracket 2,S \rrbracket$. Hence (\ref{eq:compareunderYW}).

\bigskip

Finally, (\ref{eq:compareYW0}) readily entails that
$\{\theta^{-n}\maG\}$ is a stationary sequence of renovating events
of length $S-1$, for any sequence
$\left\{W\circ\theta^n\right\},$ with $W$ a solution to
(\ref{eq:recurstatImp}). Remark 2.5.3 of \cite{BacBre02} then shows
the uniqueness of the solution of (\ref{eq:recurstatImp}). Also, it
follows from (\ref{eq:compareYW0}) that
\[\pr{\undY(1) \le D} \le \mathbb P_l = \pr{W(1)\le D} \le \pr{\ovY(1) \le D},\]
which completes the proof.
%
\end{proof}

We now turn to the proof of Theorem \ref{thm:extension},
\begin{proof}[Proof of Theorem \ref{thm:extension}]
The extension $\bar \Omega$ can be constructed explicitly using the
technique developed in \cite{Moy15}. For simplicity, we reproduce
the notation therein.

\medskip

Applying again assertion (i) of Lemma \ref{lemma:compareImp}, we see that
$\varphi(u) \prec \overline\varphi(u)$ a.s. for any $u\in (\R_+)^S$. Also, it is easily seen that the map
$\overline\varphi$ is a.s. $\prec$-non-decreasing, and the equation (\ref{eq:recurstatover}) admits a finite solution $\ovY$ in view of Proposition \ref{prop:stabover}.
 Consequently, the assumption (H1) of \cite{Moy15} is satisfied.
 So does condition (H3) of [\emph{ibid.}], as the locally finite set $\mathscr L_\alpha$ is clearly almost surely stable by the mapping $\varphi$.

Consequently, we are in the case (iii) of Theorem 3 of \cite{Moy15}: there exists a weak solution to (\ref{eq:recurstatImp}).
(See
the precise construction of the extension $\bar{\mathscr Q}=\left(\bar\Omega, \bar\maF, \bp, \bar\theta\right)$ of $\mathscr Q$ in
\cite{Moy15}.) In particular, denoting for all $x \in \N^S$ and all samples $\om$,
$$\Phi^n[\om](x):=W^{[x]}_{n}\left(\theta^{-n}\om\right)=\varphi[\theta^{-1}\om]\circ\varphi[\theta^{-2}\om]\circ ...\circ\varphi[\theta^{-n}\om]\left(x\right),$$
and defining the random set
\begin{equation}
\label{eq:defH1} H=\bigcap_{n \ge 1} \Phi^n\left(\mathscr L_\alpha\cap \lllbracket
\mathbf 0,Y\circ\theta^{-n} \rrrbracket \right)
\end{equation}
(recalling the notation (\ref{eq:lll})), the extension $\bar\Omega$ is defined by
$$\bar \Omega=\Bigl\{(\om,  x) \in \Omega\times \RS;   x\in H[\om]\Bigl\},$$
and the shift $\theta$ on $\bar\Omega$, by
$$\bar \theta[\om,x]=\left(\theta\om,\varphi[\om](x)\right),\,(\om,x)\in \bar\Omega.$$
It is then easily seen that the couple $(\bar W,\bar\varphi)$ defined by
\begin{equation}
\label{eq:defsolextension} \bar W[\omega,x]:=
x,\,\,\,\bar \psi[\omega,x]:=\varphi[\omega],
\end{equation}
readily satisfies $\bar W\circ\bar\theta=\bar \psi\left(\bar W\right),\bp-\mbox{a.s.}.$ Hence the result. 
\end{proof}

We conclude with the proof of Theorem \ref{thm:GIGI},
\begin{proof}[Proof of Theorem \ref{thm:GIGI}]
To prove assertion (i) of Theorem \ref{thm:GIGI}, it suffices to observe that the proof of
Corollary 2 in \cite{Moy13}, which shows the equivalence between (\ref{eq:condstabImp1}) and \eqref{eq:condstabImp3}
in the GI/GI/./.+GI case, also holds true in the more general case where $\mathbf{(H_a)}$ holds. Corollary \ref{cor:stabImp2} allows to conclude.

\medskip

We now turn to the proof of (ii). Consider a system satisfying $\mathbf{(H_b)}$. If (\ref{eq:condstabImp4}) holds true, then there exists
$x>0$ and $0<\epsilon<x$ such that $\pr{\sigma \le x-\epsilon}>0$ and $\pr{\tau > x}>0$.
Observe that there exists $n_0 \in \N$ such that
\[\pr{Z_1\circ\theta^{-n_0} < n_0\epsilon} = \pr{Z_1 < n_0\epsilon} >0.\]
Then, define the event
\[\maA = \left\{Z_1 \circ\theta^{-n_0} < n_0\epsilon\right\} \cap \left(\bigcap_{i=1}^{n_0} \{\tau\circ\theta^{-i}>x\}\right) \cap \left(\bigcap_{i=1}^{n_0} \{\sigma\circ\theta^{-i}\le x-\epsilon\}\right).\]
It follows from the independence assumption that the three events in the intersection above are independent of one another, and therefore
\begin{equation}
\label{eq:prA}
\pr{\maA}>0.
\end{equation}
Denote for any $\ell \in \llbracket 1,S \rrbracket$, by {\em server $\ell$}
the server whose workload at the arrival time of customer $-n_0$ corresponds to the coordinate $W_{-n_0}(\ell)$.
Also, denote for all such $\ell$, by $\maC_\ell \subset \{1,2,...,n_0\}$, the (random) set gathering the opposites of all indexes of the customers arrived between customer $C_{-n}$ and $C_{-1}$ included, and taken care of by server $\ell$. Then, for any $\ell \in \llbracket 1,S \rrbracket$, we are in the following alternative,
\begin{enumerate}
\item[(1)] if server $\ell$ never idles before the arrival time of customer $-1$, then its workload at time 0 equals
\begin{multline}\label{eq:case1}
\left[W_{-n_0}(\ell)+\sum_{i \in \maC_\ell} \sigma\circ\theta^{-i} - \sum_{i=1}^{n_0} \tau\circ\theta^{-i} \right]^+\\
  \le \left[W_{-n_0}(\ell)+\sum_{i=1}^{n_0} \left(\sigma\circ\theta^{-i} - \tau\circ\theta^{-i}\right) \right]^+.\end{multline}
\item[(2)] if for some $k_\ell \in \llbracket 1,n_0 \rrbracket$ (take the smallest one), server $\ell$ is idling upon the arrival of $C_{-k_\ell}$, then its workload
at time 0 equals
\begin{equation}
\label{eq:case2}
\left[\sum_{i \in \maC_\ell \cap \llbracket 1,k_\ell \rrbracket} \sigma\circ\theta^{-i} - \sum_{i=1}^{k_\ell} \tau\circ\theta^{-i} \right]^+
  \le \left[\sum_{i=1}^{k_\ell} \left(\sigma\circ\theta^{-i} - \tau\circ\theta^{-i}\right) \right]^+.
  \end{equation}
\end{enumerate}
Let us now define the random set
\begin{equation}
\label{eq:defH2} H=\bigcap_{n \ge 1}H^n:=\bigcap_{n \ge 1} \Phi^n\left(\left[
\mathbf 0,Y\circ\theta^{-n} \right] \right).
\end{equation}
Let $\om \in \maA$ and suppose that $W_{-n_0}[\om]=x \in \left[\mathbf 0,Y\left[\theta^{-n_0}\om\right]\right]$.
Then, for any $j\in \llbracket 1,S \rrbracket$, let $\ell_j$ be the index of the server whose workload at 0 is given by $W_0(j)$. Then, either
 $\ell_j$ is in the alternative (1) above, in which case from (\ref{eq:case1}) together with (\ref{eq:compareYZ}),
\begin{align*}
\left(\Phi^{n_0}[\om](x)\right)(j)=W_0[\om](j)&\le  \left[x(\ell_j)+\sum_{i=1}^{n_0} \left(\sigma\circ\theta^{-i}[\om] - \tau\circ\theta^{-i}[\om]\right) \right]^+\\
&\le \left[Y_S\left[\theta^{-n_0}\om\right]+\sum_{i=1}^{n_0} \left(\sigma\circ\theta^{-i}[\om] - \tau\circ\theta^{-i}[\om]\right) \right]^+\\
&\le \left[n_0\epsilon - n_0(x-\epsilon-x)\right]^+=0,
\end{align*}
or $\ell_j$ is in alternative (2), in which case from (\ref{eq:case2}),
\[\left(\Phi^{n_0}[\om](x)\right)(j)\le \left[k_{\ell_j}(x-\epsilon-x) \right]^+=0.\]

In all cases, we obtain $\Phi^{n_0}[\om](x)=\mathbf 0$. Since this is true for all $x \in \left[\mathbf 0,Y\left[\theta^{-n_0}\om\right]\right]$,
we have $H^{n_0}[\om]=\{\mathbf 0\}$.
We conclude that $\maA \subset \{H^{n_0}\mbox{ is finite }\}$, and thus with (\ref{eq:prA}), that
\[\pr{\bigcup_{n\ge 1}\{H^{n}\mbox{ is finite }\}}>0.\]
Thus assumption (8) in \cite{Moy15} is satisfied, so we can apply Theorem 1 in [{\em ibid.}]: there exists a weak solution to (\ref{eq:recurstatImp}). Moreover,
in view of the argument above we have that Card$\,H=1$ (where $H$ is defined by (\ref{eq:defH2})), so there exists a unique solution on the original quadruple $\mathscr Q$, in view of Theorem 2 of \cite{Moy15}.
\end{proof}

\providecommand{\bysame}{\leavevmode\hbox to3em{\hrulefill}\thinspace}
\providecommand{\MR}{\relax\ifhmode\unskip\space\fi MR }
\providecommand{\MRhref}[2]{%
  \href{http://www.ams.org/mathscinet-getitem?mr=#1}{#2}
}
\providecommand{\href}[2]{#2}

\end{document}